\documentclass[12pt]{article}

\usepackage{amssymb}
\usepackage{amsthm}
\usepackage{amsmath}

\usepackage{graphicx}
\usepackage[utf8]{inputenc}
\usepackage[table]{xcolor}
\usepackage{subcaption}
\usepackage{adjustbox} 

\usepackage{float}
\usepackage{braket}

\usepackage{caption}

\usepackage{framed}
\usepackage{tabularx}
\usepackage{colortbl}
\usepackage{longtable}
\usepackage{multirow}



\def \cF {\mathcal{F}}
\def \cH {\mathcal{H}}
\def \zzv {\mathbb{Z}_v}

\def \bsk {\bigskip}

\def \kn {K_n^{(3)}}
\def \kv {K_v^{(3)}}
\newcommand{\kpq}[2]{K_{#1\to #2}^{(3)}}
\newcommand{\crt}[2]{K_{#1\leftrightarrow #2}^{(3)}}
\newcommand{\ktri}[1]{K_{#1}^{(3)}}
\newcommand{\kvv}[1]{K_{#1}^{(3)}-I}
\def \kvi {K_v^{(3)}-I}
\def \kvr {K_v^{(r)}}
\def \kvri {K_v^{(r)}-I}
\def \chk {\cH_2}

\newcommand{\CCC}[2]{{\mathcal{C^*}(3,#1,#2)}}

\newtheorem{Theorem}{Theorem}
\newtheorem{Remark}[Theorem]{Remark}
\newtheorem{Lemma}[Theorem]{Lemma}

\newtheorem{Corollary}[Theorem]{Corollary}

\newenvironment{Proof}{\noindent {\it Proof.}}{}

\newenvironment{Proofof}[1]{\noindent {\it Proof of #1.}}{}

 \newcounter{conjecture}
 \setcounter{conjecture}{0}
 \newtheorem{Conjecture}[conjecture]{Conjecture}

\setlength{\tabcolsep}{3pt}

\definecolor{color5}{HTML}{BCBCBC}
\newcolumntype{f}{>{\columncolor{color5}}c}

\begin{document}

\title{Spectrum of 3-uniform $6$- and $9$-cycle systems
 over $K_v^{(3)} - I$}

\author{Anita Keszler$^1$ and \vspace*{2ex} Zsolt Tuza$^{2,3}$\\
\normalsize $^1$ Machine Perception Research Laboratory \\
\normalsize Institute for Computer Science and Control (SZTAKI) \\
\normalsize 1111 Budapest, Kende u.\ 13--17, \vspace*{2ex} Hungary \\
\normalsize $^2$ Alfr\'ed R\'enyi Institute of Mathematics \\
\normalsize 1053 Budapest, Re\'altanoda u.\ 13--15, \vspace*{2ex} Hungary \\
\normalsize $^3$ Department of Computer Science and Systems Technology \\
\normalsize University of Pannonia \\
\normalsize 8200 Veszpr\'em, Egyetem u.\ 10, Hungary
}
\date{}
\maketitle



\begin{abstract}
We consider edge decompositions of $K_v^{(3)} - I$, the complete
 3-uniform hypergraph of order $v$ minus a 1-factor
 (parallel class, packing of $v/3$ disjoint edges).
We prove that a decomposition into tight 6-cycles exists if and only if
 $v\equiv 0,3,6$ (mod 12) and $v\geq 6$; and a decomposition into
 tight 9-cycles exists for all $v\geq 9$ divisible by 3.
These results are complementary to the theorems of Akin et al.\
 [Discrete Math.\ 345 (2022)] and Bunge et al.\
  [Australas.\ J. Combin.\ 80 (2021)].

\bsk

\noindent
\textbf{Keywords:}
hypergraph, edge decomposition, tight cycle, hypercycle system, Steiner system.

\bsk

\noindent
\textbf{AMS Subject Classification 2020:}
05C65, 
05C51, 
05C38. 

\end{abstract}


\section{Introduction}
\label{sec:intro}

A \emph{decomposition} of a graph or hypergraph $\cH$ is a collection of sub\-(hyper)\-graphs $\cH_1,\dots,\cH_m$ such that $\cH$ is their edge-disjoint union.
If all $\cH_i$ are isomorphic to a fixed hypergraph $\cF$, then $\{\cH_1,\dots,\cH_m\}$ is called an \emph{$\cF$-decomposition} of $\cH$.
Here we study decompositions of nearly complete 3-uniform hypergraphs into cycles of given lengths 6 and 9.

Edge decompositions of complete graphs $K_v$, and complete graphs minus a 1-factor, $K_v-I$, of order $v$ have a long history for over a century.
Concerning the existence of decompositions into cycles of a fixed length it was proved by Alspach and Gavlas \cite{AG_2001} and \v Sajna \cite{S_2002} that the standard necessary arithmetic conditions---i.e., the degree $v-1$ or $v-2$ must be even, and the number $\binom{v}{2}$ or $\frac{v(v-2)}{2}$ of edges must be a multiple of the cycle length---are also sufficient.
Also the existence of decompositions into Hamiltonian cycles requires just the proper parity of $v$; this well-known fact dates back to the 19th century.

\subsection{Berge cycles in hypergraphs}
\label{subsec:berge}

For complete 3-uniform hypergraphs $\kv$ of order $v$, as a first step of generalization, the situation becomes more complicated.
In hypergraphs there are several ways to define cycles, and the stricter one we take the harder the question of decomposability becomes. 

The weakest requirement is imposed on \emph{Berge $k$-cycles} that are alternating cyclic sequences $v_1,e_1,v_2,e_2,\dots,v_k,e_k$ of $k$ mutually distinct vertices and edges such that $v_i\in e_i\cap e_{i-1}$ for all $1<i\leq k$ and $v_1\in e_1\cap e_k$.
The particular case of $k=n$ means Hamiltonian Berge cycle; the existence of decompositions of $\kv$ into such cycles was proved for $v\equiv 2,4,5$ (mod 6) by Bermond \cite{B_1978} and for the other cases by Verall \cite{V_1994}.
K\"uhn and Osthus \cite{KO_2014} generalized this result for
 complete $r$-uniform hypergraphs $\kvr$ ($r\geq 4$), with the
 slight restriction $v\geq 30$ if $r=4$ and $v\geq 20$ if $r\geq 5$.
Analogous results are valid for $\kvi$ and more generally for
 $\kvri$ as well; these nearly complete hypergraphs are obtained
 from $\kvr$ by the deletion of a 1-factor (also called parallel
 class in some context), i.e.\ omitting $v/r$ mutually disjoint edges.
(This requires the assumption that the number $v$ of vertices is a
 multiple of the edge size $r$.)
Moreover, for cycles of given length the decomposition problem on
 $\kvr$ (and even for complete multi-hypergraphs) was solved by
 Javadi, Khodadadpour and Omidi \cite{JKO_2018} for all $v\geq 108$.
For cycle lengths $k=4$ and $k=6$ with edge size $r=3$ it is also
 known that the lower bounds on $v$ can be omitted; see
  \cite{JN_2018} and \cite{LP_2021}, respectively.

\subsection{Tight cycles in uniform hypergraphs}
\label{subsec:tightcycles}

A stricter cycle definition is \emph{$r$-uniform tight\/ $k$-cycle}
 that means a cyclic sequence $v_1,v_2,\dots,v_k$ of
 $k$ vertices, together with $k$ edges
 formed by the
 $r$-tuples of consecutive vertices $v_i,v_{i+1},\dots,v_{i+r-1}$
  ($i=1,2,\dots,k$, subscript addition taken modulo $k$).

\emph{From now on, by\/ \emph{$k$-cycle} we mean $3$-uniform
 tight $k$-cycle.}
The decomposition problem into such cycles seems much harder than
 the one above on Berge cycles, already on $\kv$ and $\kvi$.
Decomposability of $\kv$ into Hamiltonian cycles has the simple
 necessary condition $3\mid (n-1)(n-2)$.
But its sufficiency has been studied only within a limited range
 ($v\leq 16$ by Bailey and Stevens \cite{BS_2010},
 $v\leq 32$ by Meszka and Rosa \cite{MR_2009},
 $v\leq 46$ by Huo et al.\ \cite{H-etal_2015}.)

For fixed cycle length $k$ concerning $\kv$, only three cases are
 solved: the very famous class of Steiner Quadruple Systems ($k=4$)
 by the classical theorem of Hanani \cite{H_1960}, and the
 very recent works by Akin et al.\ \cite{A-etal_2022} for $k=6$ and
 by Bunge et al.\ \cite{B-etal_2021} for $k=9$.
There are many constructions for $k=5$ and $k=7$, but no complete
 solution is available on them; for partial results and further
 references we cite \cite{KT_2021} and \cite{MMGJ_2019}.

Johnson \cite{J_1962} and Sch\"onheim \cite{S_1966} proved general
 upper bounds from which it follows that a packing of edge-disjoint
 4-cycles (i.e., a partial SQS$(v)$ system) cannot contain more than
$$
  \left\lfloor \frac{v}{4} \left\lfloor \frac{v-1}{3}
   \left\lfloor \frac{v-2}{2} \right\rfloor \right\rfloor
    \right\rfloor
$$
 4-cycles.
For $v\equiv 3$ (mod 6) this means $v(v-1)(v-3)/24$.
Moreover, Brouwer \cite{B_1977} emphasizes that another, essentially
 forgotten upper bound due to Johnson yields $v(v^2-3v-6)/24$ if
 $v\equiv 0$ (mod 6).
Hence, in either case the number of edges covered by any collection
 of 4-cycles is smaller than $v^2(v-3)/6$.
In our context this fact has the following
 important consequence.

\begin{Corollary}
 No\/ $\kvi$ can admit a decomposition
  into tight\/ $4$-cycles.
\end{Corollary}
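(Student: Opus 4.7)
The plan is essentially an arithmetic matching between the two quantities the paragraph has just introduced: the Johnson/Schönheim/Brouwer packing bound on one side, and the edge count of $\kvi$ on the other.

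First I would observe that a tight $4$-cycle on vertices $v_1,v_2,v_3,v_4$ consists of the four triples $\{v_i,v_{i+1},v_{i+2}\}$ (subscripts mod $4$), which are exactly all $\binom{4}{3}$ triples on those four vertices. Hence a tight $4$-cycle is the same object as a copy of $K_4^{(3)}$, and any collection of edge-disjoint tight $4$-cycles in a $3$-uniform hypergraph on $v$ vertices is a partial Steiner quadruple system, to which the bounds already quoted in the preceding paragraph apply directly.

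Next I would count the edges of $\kvi$. Since a $1$-factor consists of $v/3$ disjoint triples, we have
\[
  |E(\kvi)| \;=\; \binom{v}{3} - \frac{v}{3}
  \;=\; \frac{v(v-1)(v-2)-2v}{6}
  \;=\; \frac{v\bigl(v^2-3v\bigr)}{6}
  \;=\; \frac{v^2(v-3)}{6}.
\]
So the target number of edges to be covered is exactly $v^2(v-3)/6$.

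The final step is to combine this with the bounds reviewed just above. In both residue classes $v\equiv 0,3\pmod 6$ (the only cases in which $3\mid v$ and the arithmetic makes sense), the paragraph records that the total number of edges covered by any collection of tight $4$-cycles on $v$ vertices is \emph{strictly} less than $v^2(v-3)/6$. But a decomposition of $\kvi$ into tight $4$-cycles would have to cover exactly $v^2(v-3)/6$ edges. This contradiction completes the proof.

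The only delicate point, and the one I would double-check in writing, is the strictness of the inequality: for $v\equiv 3\pmod 6$ the Johnson--Schönheim bound gives $v(v-1)(v-3)/24$ blocks, hence $v(v-1)(v-3)/6$ edges, which is less than $v^2(v-3)/6$ by a factor of $(v-1)/v$; and for $v\equiv 0\pmod 6$ Brouwer's form of Johnson's bound yields $v(v^2-3v-6)/6$ edges, short of $v^2(v-3)/6 = v(v^2-3v)/6$ by $v$. So strict inequality holds for every admissible $v$, and no borderline case needs separate treatment.
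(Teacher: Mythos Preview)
Your proposal is correct and follows exactly the line the paper itself uses: the Corollary is stated as an immediate consequence of the Johnson--Sch\"onheim and Brouwer packing bounds just quoted, together with the edge count $|E(\kvi)|=v^2(v-3)/6$. You have simply made explicit the arithmetic (the identification of a tight $4$-cycle with $K_4^{(3)}$, the edge count, and the strictness check in both residue classes) that the paper leaves to the reader.
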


The goal of the present note is to prove that in the other two 
 cases that are solved for $\kv$, namely $k=6$ and $k=9$, the
 $k$-cycle decompositions of $\kv$ have their natural analogues
 for $\kvi$.
Hence we solve the spectrum problem for the decomposability
 of $\kvi$ for the cases of 6-cycles and 9-cycles.
More explicitly, we prove the following two results.

\begin{Theorem}
\label{t:c6}
The hypergraph\/ $\kvi$ admits a decomposition into\/ $6$-cycles
 if and only if\/ $v\geq 6$ and\/ $v\equiv 0,3,6$ {\rm (mod 12)}.
\end{Theorem}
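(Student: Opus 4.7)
The plan is to dispense with necessity by a short divisibility argument, and then to build sufficiency by combining explicit small decompositions with a recursion that raises $v$ by $12$.

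For necessity, $\kvi$ has $\binom{v}{3}-v/3 = v^2(v-3)/6$ edges, so decomposability into tight $6$-cycles forces $36\mid v^2(v-3)$. Writing $v=3m$ (required for the $1$-factor $I$ to exist), this reduces to $4\mid m^2(m-1)$, which is automatic when $m$ is even and equivalent to $m\equiv 1\pmod 4$ when $m$ is odd. Together these give $m\not\equiv 3\pmod 4$, i.e.\ $v\equiv 0,3,6\pmod{12}$. The vertex-degree condition $3\mid\binom{v-1}{2}-1$ imposes no further restriction.

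For sufficiency I would treat the three residue classes separately. I would first construct explicit decompositions for the base cases $v=6,12,15,18$ (and as many further small values as are needed to seed the recursion in each class), most likely by fixing a few starter $6$-cycles on the vertex set $\zzv$ and developing them under the cyclic action of $\mathbb{Z}_v$, then checking that every triple of $\kvi$ is covered exactly once. To extend these to all valid larger $v$ I would aim for a recursion $v\mapsto v+12$: partition the vertex set of $K_{v+12}^{(3)}$ into an old part $X$ of size $v$ and a new part $Y$ of size $12$, arrange $I$ so that it splits as a $1$-factor on $X$ together with a $1$-factor on $Y$, and decompose the edges of $K_{v+12}^{(3)}-I$ into four families: triples inside $X$, triples inside $Y$, triples of type $(2X,1Y)$, and triples of type $(1X,2Y)$. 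The first two families are handled by induction and by the $v=12$ base case respectively.

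The hard part will be decomposing the two \emph{mixed} families. My tool of choice would be tight $6$-cycles with a rigid alternation pattern: a cycle of type $YYXYYX$ uses only triples $(2Y,1X)$, and one of type $YXXYXX$ uses only triples $(1Y,2X)$. Using such cycles reduces the mixed part to a pair of auxiliary covering designs that prescribe, for each $X$-vertex, the multiset of $Y$-pairs with which it must be joined (and symmetrically); resolving these exploits the rich group-divisible structure available on a set of size exactly $12$ (for instance $\mathbb{Z}_{12}$-orbits or resolvable near-Steiner configurations) matched with a compatible structure on $X$. This is where I expect the technical work to concentrate, and a few exceptional small $v$ in each residue class may need ad hoc constructions before the recursion stabilises.
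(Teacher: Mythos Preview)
Your necessity argument is correct and matches the paper's. The sufficiency plan, however, has a genuine gap: the decomposition of the crossing triples $\crt{v}{12}$ into tight $6$-cycles is not actually carried out. You flag it yourself as ``the hard part'' and offer only a gesture toward ``group-divisible structure'' and possible ``ad hoc constructions.'' Concretely, a $YYXYYX$ cycle $y_1y_2x_1y_3y_4x_2$ forces the $Y$-pairs seen by $x_1$ to form the $P_4$ $y_1y_2,\,y_2y_3,\,y_3y_4$ and those seen by $x_2$ to form the $P_4$ $y_3y_4,\,y_4y_1,\,y_1y_2$; the two $P_4$'s live on the same four $Y$-vertices and share two edges. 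So you would need, for every $x\in X$, a $P_4$-decomposition of $K_{12}$ on $Y$, with all these decompositions coupled pairwise in this rigid way across the $YYXYYX$ cycles, and a symmetric structure on the $YXXYXX$ side, uniformly for every $v\equiv 0,3,6\pmod{12}$. That is a nontrivial design problem in its own right; as written, the recursion has no engine.

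The paper avoids this entirely by a blow-up rather than a recursion. Writing $v=3(4u+s)$ with $s\in\{0,1,2\}$, it regards the edges of the missing $1$-factor as $4u+s$ vertex-triples $A_1,\dots,A_{4u+s}$ substituted into the vertices of $\ktri{4u+s}$. Triples meeting exactly two parts are covered by the explicit three-cycle decomposition of $\kvv{6}$. Triples meeting three parts are organised via the Bermond--Germa--Sotteau theorem that $\ktri{n}$ has an $\chk$-decomposition whenever $n\equiv 0,1,2\pmod 4$ (precisely the values $n=4u+s$); each $\chk$ corresponds to a $\ktri{3,3,6}$, which splits into three copies of $\ktri{3,3,2}$, each with an explicit $6$-cycle decomposition. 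Two tiny building blocks plus one cited theorem do all the work, with no recursion and no $\crt{v}{12}$ to analyse.
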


\begin{Theorem}
\label{t:c9}
The hypergraph\/ $\kvi$ admits a decomposition into\/ $9$-cycles
 if and only if\/ $v\geq 9$ and\/ $v$ is a multiple of\/ $3$.
\end{Theorem}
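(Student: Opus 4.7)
Necessity is immediate: a tight $9$-cycle has $9$ distinct vertices, so $v \geq 9$, and a $1$-factor of $\kv$ exists only when $3 \mid v$. Under these hypotheses the standard arithmetic conditions hold automatically. Writing $v = 3m \geq 9$, the edge count of $\kvi$ equals $\frac{9m^2(m-1)}{2}$, a multiple of $9$, and each vertex has degree $\frac{9m(m-1)}{2}$, a multiple of $3$. Thus sufficiency is the real content.

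For sufficiency the plan is to combine explicit constructions for a short list of small base orders with a recursive step that raises $v$ by a fixed multiple of $3$. The cleanest base constructions should come from cyclic methods on $\mathbb{Z}_v$: fix the $1$-factor $I = \{\{i, i+m, i+2m\} : 0 \leq i < m\}$, so the remaining edges of $\kv$ partition into $\mathbb{Z}_v$-orbits indexed by unordered triples $\{d_1, d_2, d_3\}$ of nonzero differences with $d_1 + d_2 + d_3 \equiv 0 \pmod{v}$, after deleting the class $\{m, m, m\}$ corresponding to $I$. The task then reduces to choosing starter tight $9$-cycles whose nine edge-difference triples exhaust each admissible orbit exactly once; this is a finite search that should settle the seed values $v = 9, 12, 15, 18, 21, 24$, augmented by the involution $x \mapsto -x$ when needed to balance orbit sizes.

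For the recursive step I would try a ``$+3$'' construction: add three vertices $a, b, c$ to $V(\kvi)$ and take $I' = I \cup \{\{a, b, c\}\}$. The edges to be newly packed are those of $K_{v+3}^{(3)}$ with at least one vertex in $\{a, b, c\}$ except $\{a, b, c\}$ itself, a set of size $\frac{3v(v+1)}{2}$, always a multiple of $9$ when $3 \mid v$. The principal obstacle is precisely this step: exhibiting a uniform decomposition of this ``tripartite-flavored'' hypergraph of new cross-edges into tight $9$-cycles. I would classify the new edges by whether they contain one or two of $a, b, c$, split each type into $\mathbb{Z}_v$-orbits under translation of the old vertices, and design starter $9$-cycles whose orbits exhaust each orbit class. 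Should the $+3$ step prove awkward for some residue of $v$ modulo a small period, a fallback is a tripling via a resolvable $3$-group-divisible design of type $3^m$, substituting a base decomposition into each group and covering cross-group edges separately.
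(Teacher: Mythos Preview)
Your necessity argument is fine, and the idea of cyclic seed constructions is sound (indeed the paper exhibits cyclic $\CCC{9}{v}$ systems for $v=9,12,15,\dots$). The genuine gap is the ``$+3$'' recursion: the crossing hypergraph $\crt{v}{3}$ admits \emph{no} decomposition into tight $9$-cycles at all. Label the nine cycle vertices by $x_i\in\{0,1\}$ according to membership in the $3$-element side. Each edge being a crossing triple forces $x_i+x_{i+1}+x_{i+2}\in\{1,2\}$ for all $i$; summing over $i$ gives $3S$ with $9\le 3S\le 18$, so $S\ge 3$, while $S\le 3$ since only three new vertices exist. Thus $S=3$, and then $\sum_i(x_i+x_{i+1}+x_{i+2})=9$ with each summand in $\{1,2\}$ forces every summand to equal $1$. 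Hence every edge of any admissible tight $9$-cycle meets $\{a,b,c\}$ in exactly one vertex, and the $3v$ edges of type $\{a,b,x\},\{a,c,x\},\{b,c,x\}$ are left completely uncovered. Your recursion therefore cannot start, regardless of how the $\mathbb{Z}_v$-orbit bookkeeping is arranged.

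The paper avoids this obstruction by blowing up into blocks of size~$9$ (not~$3$), treating the three residues of $v$ modulo~$9$ separately. Crossing edges between two size-$9$ blocks form $\crt{9}{9}$, which \emph{does} decompose (via $\kpq{9}{3}$, $\kpq{6}{3}$ and $\ktri{3,3,3}$ pieces), and crossing edges among three blocks form $\ktri{9,9,9}$. For $v\equiv 3,6\pmod 9$ a single exceptional block of size $3$ or $6$ is absorbed by pairing it with one size-$9$ block and invoking an explicit $\CCC{9}{12}$ or $\CCC{9}{15}$ system, so the troublesome $\crt{\,\cdot\,}{3}$ never has to be decomposed in isolation. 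Your vague ``tripling'' fallback is closer in spirit to this, but as stated it neither specifies the ingredient decompositions nor addresses the residues $v\not\equiv 0\pmod 9$, so it does not yet constitute a proof.
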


\begin{Remark}
Since\/ $\kvi$ has\/ $v^2(v-3)/6$ edges, and every\/ $k$-cycle has
 exactly\/ $k$ edges, for the existence of a decomposition into\/
 $k$-cycles we must have\/ $6k\mid v^2(v-3)$, therefore
 the conditions given in Theorems \ref{t:c6} and \ref{t:c9} are
 necessary.
\end{Remark}

Sufficiency of the conditions in Theorems \ref{t:c6} and \ref{t:c9}
 will be proved in Section~\ref{s:c6} and Section \ref{s:c9}, respectively.
Further systems with additional properties are constructed in Sections \ref{sec:2_split_sys} and \ref{sec:concl}.
One of those properties specifies ``2-split systems'' \cite{GMT_2020}
 that have been used frequently in recursive constructions
 for other cycle lengths.
The other considered type is ``cyclic systems'' having a
 rotational symmetry.

\subsection{Notation}

We write $\CCC{k}{v}$ to denote any decomposition of
 $\kvi$ into tight 3-uniform $k$-cycles.
Moreover, for some particular types of 3-uniform
 hypergraphs we use the following notation
  (see Figure \ref{fig:Kab} for illustrations):
\begin{itemize}
  \item $\ktri{a,b,c}$ --- complete 3-partite hypergraph whose
    vertex set is partitioned into three sets $A,B,C$ with
     $|A|=a$, $|B|=b$, $|C|=c$, and a 3-element set
    $T\subset A\cup B\cup C$ is an edge if and only if
    $|T\cap A|=|T\cap B|=|T\cap C|=1$.
  \item $\kpq{a}{b}$ --- 3-uniform hypergraph whose vertex set is
    partitioned into two sets $A$ and $B$ with $|A|=a$ and
    $|B|=b$, and a 3-element set $T\subset A\cup B$ is an
    edge if and only if $|T\cap A|=2$ and $|T\cap B|=1$.
  \item $\crt{a}{b}$ --- a hypergraph of ``crossing triplets'':
    3-uniform hypergraph whose vertex set is
    partitioned into two sets $A$ and $B$ with $|A|=a$ and
    $|B|=b$, and a 3-element set $T\subset A\cup B$ is an
    edge if and only if it meets both $A$ and $B$.
\end{itemize}

\begin{figure}
\centering
\begin{subfigure}{0.31\textwidth}
    \fbox{\includegraphics[width=\textwidth]{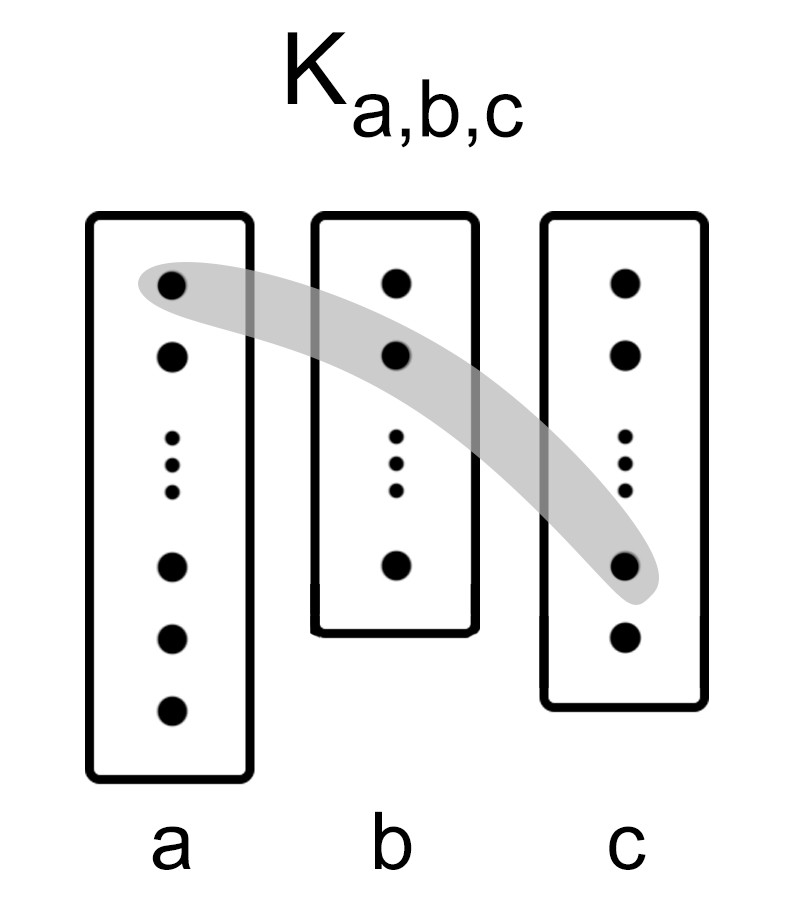}}
    \caption{$\ktri{a,b,c}$}
    \label{fig:Kab_1}
\end{subfigure}
\hfill
\begin{subfigure}{0.31\textwidth}
    \fbox{\includegraphics[width=\textwidth]{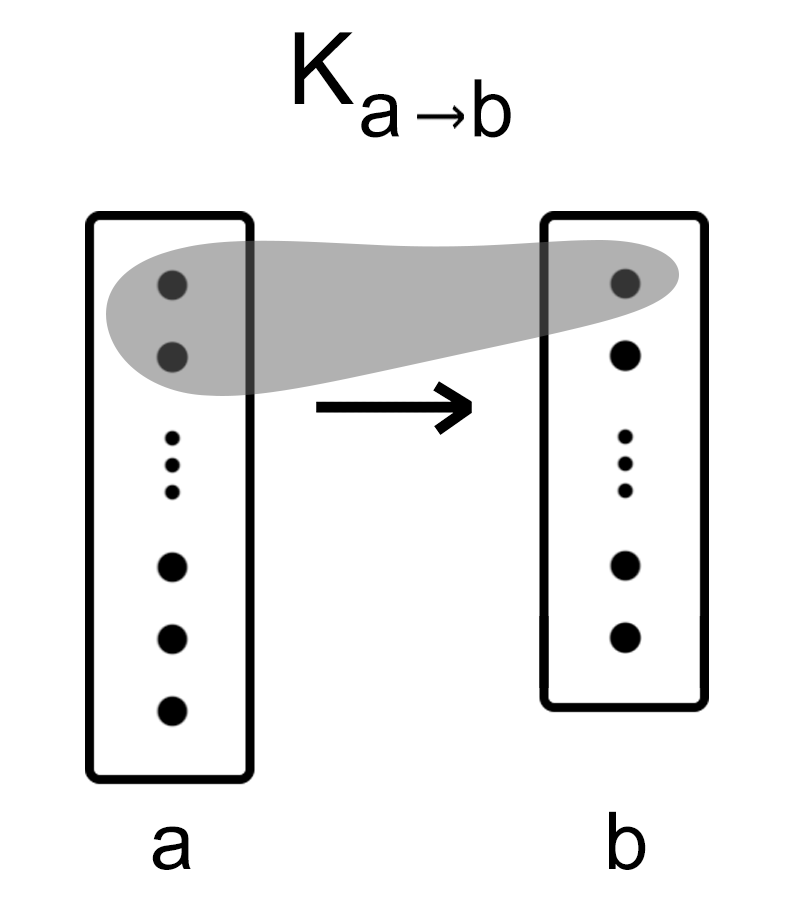}}
    \caption{$\kpq{a}{b}$}
    \label{fig:Kab_2}
\end{subfigure}
\hfill
\begin{subfigure}{0.31\textwidth}
    \fbox{\includegraphics[width=\textwidth]{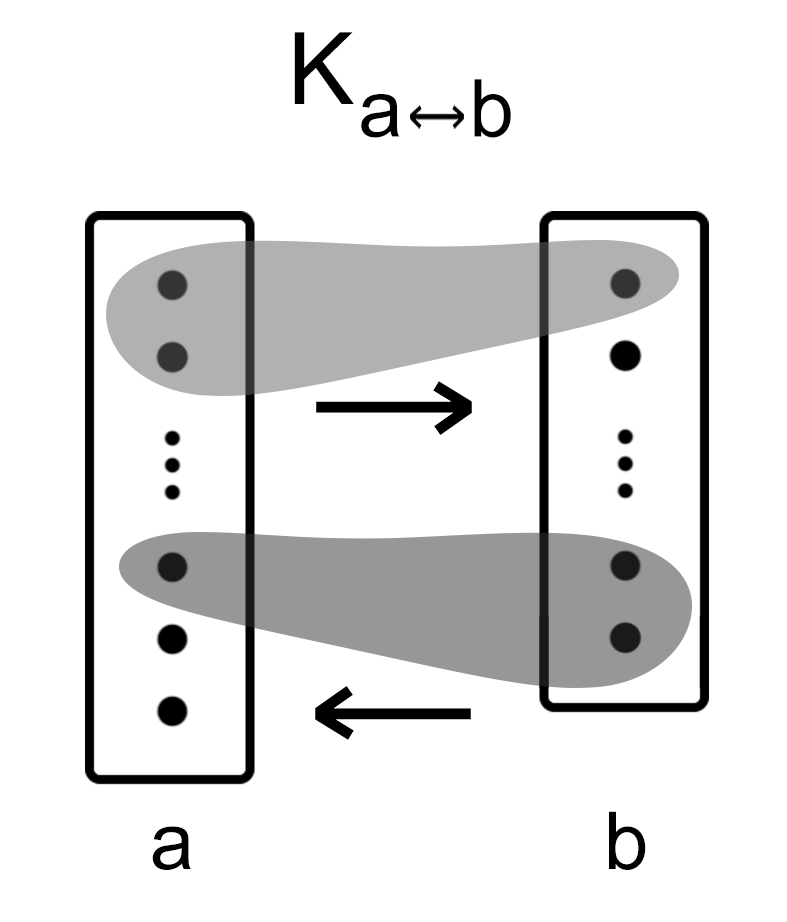}}
    \caption{$\crt{a}{b}$}
    \label{fig:Kab_3}
\end{subfigure}
\hfill
\caption{Illustrations for $\ktri{a,b,c}$, $\kpq{a}{b}$ and $\crt{a}{b}$.}
\label{fig:Kab}
\end{figure}


\section{The spectrum of $\CCC{6}{v}$ systems}
\label{s:c6}

In this section we prove Theorem \ref{t:c6}.
Let $v=12m+3s$, where $m\geq 1$ is any integer and $s=0,1,2$.
We denote by $\chk$ the 3-uniform hypergraph with four vertices
 $a,b,c,d$ and two edges $abc,abd$.
We shall apply the following well-known result.

\begin{Theorem}[Bermond, Germa, Sotteau \cite{BGS_1977}]
\label{t:H2} 
If\/ $n\equiv 0,1,2$ {\rm (mod 4)}, then\/ $\kn$ has a\/
 $\chk$-decompo\-si\-tion.
\end{Theorem}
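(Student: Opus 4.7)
The condition $n \equiv 0, 1, 2 \pmod 4$ is necessary since every copy of $\chk$ contains exactly two edges, so the edge count $\binom{n}{3}$ of $\kn$ must be even, and this happens precisely in the stated residue classes.

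For sufficiency, my plan is to rephrase the problem as a perfect matching problem. Let $J(n,3)$ denote the Johnson graph whose vertex set is $\binom{[n]}{3}$, with two $3$-subsets adjacent if and only if they meet in exactly two elements. A $\chk$-decomposition of $\kn$ is then nothing other than a perfect matching of $J(n,3)$: each matching edge pairs two triples sharing two vertices (i.e., it is a copy of $\chk$), and a perfect matching uses every triple exactly once. Since $J(n,3)$ is connected and vertex-transitive, and its order $\binom{n}{3}$ is even under the hypothesis, the classical theorem that every connected vertex-transitive graph of even order admits a $1$-factor (provable via the Tutte--Berge formula, using that the automorphism group acts transitively on vertices) yields the required matching. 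Equivalently, Johnson graphs are known to be Hamiltonian, and alternate edges of a Hamiltonian cycle form a perfect matching when the number of vertices is even.

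Should a more constructive argument be desired, I would instead induct on $n$ in steps of $4$ to preserve the residue class, with base cases $n \in \{4, 5, 6\}$ checked by hand. At the inductive step, add four new vertices $W = \{w_1, w_2, w_3, w_4\}$ and split the new triples by $|T \cap W|$: the $4$ triples inside $W$ give $2$ copies of $\chk$; for each old vertex $a$, the $6$ triples of the form $\{a, w_i, w_j\}$ decompose into $3$ copies of $\chk$ by partitioning the $6$ edges of $K_4$ on $W$ into three pairs of adjacent edges; and the triples with $|T \cap W| = 1$ are paired off within each new vertex's class whenever $\binom{n}{2}$ is even. The main obstacle in this constructive route is the case $n \equiv 2 \pmod 4$, where $\binom{n}{2}$ is odd, so the last step cannot be carried out within a single class and one is forced to make cross-class pairings of triples $\{a, w_i, w_j\}$ with $\{a, b, w_i\}$ along the shared pair $\{a, w_i\}$; verifying that a globally consistent such pairing exists for all old vertices simultaneously requires delicate bookkeeping. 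The Johnson-graph matching approach circumvents this parity issue entirely, which is why I would prefer it.
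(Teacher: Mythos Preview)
The paper does not prove this statement; it is quoted as a known result of Bermond, Germa, and Sotteau \cite{BGS_1977} and used as a black box in the proof of Theorem~\ref{t:c6}. There is therefore no in-paper argument to compare against.

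Your Johnson-graph argument is correct and clean. The reformulation is exact: an $\chk$-decomposition of $\kn$ is precisely a perfect matching of $J(n,3)$, since a matching edge joins two triples sharing a $2$-element set and every triple must be used once. The graph $J(n,3)$ is connected and vertex-transitive (the symmetric group $S_n$ acts transitively on $3$-subsets), and it is a standard theorem (see e.g.\ Godsil--Royle, \emph{Algebraic Graph Theory}) that a connected vertex-transitive graph has a matching missing at most one vertex; hence it has a perfect matching whenever its order $\binom{n}{3}$ is even, which is exactly the hypothesis $n\equiv 0,1,2\pmod 4$. The alternative route through Hamiltonicity of Johnson graphs is equally valid. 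Compared with the original 1977 paper, which gives an explicit recursive/constructive decomposition, your approach trades constructivity for brevity: it establishes existence in a few lines but does not hand you the blocks.

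Your inductive sketch, by contrast, is incomplete as written. In the step from $n$ to $n+4$ when $n\equiv 2\pmod 4$, the triples with exactly one new vertex cannot be matched within their own class because $\binom{n}{2}$ is odd, and ``delicate bookkeeping'' is not a proof that a consistent global cross-pairing exists. Since you explicitly fall back on the Johnson-graph argument and that argument is complete, the proposal stands; but if you keep the constructive paragraph, either finish the $n\equiv 2$ case or drop it.
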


Moreover, two building blocks will be used.
The first one is derived from the cycle double cover of the edge set
 of the complete bipartite graph $K_{3,3}$ with the three cycles
 $x_1y_1x_2y_2x_3y_3$, $y_1x_2y_4x_1y_2x_3$, $x_2y_3x_3y_1x_1y_2$,
  where the two vertex classes are $\{x_1,x_2,x_3\}$ and
   $\{y_1,y_2,y_3\}$.

\begin{Lemma}
 \label{l:bi6}
The hypergraph\/ $\kvv{6}$ obtained from\/ $\ktri{6}$ by omitting
 the\/ $1$-factor\/ $I = \{ a_1a_2a_3, a_4a_5a_6 \}$ has a
  decomposition into the three\/ $6$-cycles
  $$
    a_1 a_4 a_2 a_5 a_3 a_6 \, , \quad
    a_4 a_2 a_6 a_1 a_5 a_3 \, , \quad
    a_2 a_6 a_3 a_4 a_1 a_5 \, .
  $$
\end{Lemma}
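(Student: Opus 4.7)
The plan is to prove the lemma by direct edge enumeration.  Since $K_6^{(3)}$ has $\binom{6}{3}=20$ edges and removing the two edges of $I$ leaves $18=3\cdot 6$, the three listed $6$-cycles have precisely the right total edge count.  Consequently it suffices to verify that the $18$ consecutive triples produced by the three cyclic sequences are pairwise distinct and that none of them coincides with $\{a_1,a_2,a_3\}$ or $\{a_4,a_5,a_6\}$.

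First I would expand each cyclic sequence into its six consecutive triples.  For instance, from $a_1 a_4 a_2 a_5 a_3 a_6$ one reads off the triples
\[
\{a_1,a_4,a_2\},\ \{a_4,a_2,a_5\},\ \{a_2,a_5,a_3\},\ \{a_5,a_3,a_6\},\ \{a_3,a_6,a_1\},\ \{a_6,a_1,a_4\},
\]
and analogously for the other two cycles.  Displaying the three resulting lists in a $3\times 6$ array makes it straightforward to scan for repetitions and to confirm that the two forbidden triples do not occur.

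The main obstacle is purely clerical rather than conceptual: there is no cleverness required beyond the correct listing of $18$ triples.  As a convenient sanity check I would also verify the local edge-counts: every vertex $a_i$ must appear in exactly $9$ edges of $\kvv{6}$ (three per cycle), and every unordered pair $\{a_i,a_j\}$ must be covered $3$ times if both indices lie in $\{1,2,3\}$ or both lie in $\{4,5,6\}$, and $4$ times otherwise.  Once these local counts are confirmed to match those of $\kvv{6}$, the table of $18$ consecutive triples necessarily lists every edge of $\kvv{6}$ exactly once, which completes the proof.
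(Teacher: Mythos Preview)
Your proposal is correct and is essentially the verification that the paper leaves to the reader: the lemma is stated with the three explicit cycles and no formal proof is given, so the intended check is exactly the direct enumeration of the $18$ consecutive triples that you describe. The only additional content in the paper is the remark preceding the lemma that the construction is \emph{derived} from a cycle double cover of $K_{3,3}$ (with vertex classes $\{a_1,a_2,a_3\}$ and $\{a_4,a_5,a_6\}$), which explains where the three $6$-cycles come from but does not replace the routine verification you outline.
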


The second small construction is derived by combining the
 two cyclic $P_4$-decompositions
 $x_{1+i}y_{1+i}x_{2+i}y_{3+i}$ and
  $y_{1+i}x_{1+i}y_{3+i}x_{2+i}$ ($i=0,1,2$, subscript addition
 taken modulo 3) of the same $K_{3,3}$.

\begin{Lemma}
\label{l:tri8}
The complete\/ $3$-partite hypergraph\/ $\ktri{3,3,2}$ with its three
 vertex classes\/ $\{a_1,a_2,a_3\}$, $\{b_1,b_2,b_3\}$, $\{c_1,c_2\}$
 has a decomposition into the three\/ $6$-cycles
  $$
    a_{1+i} \, b_{1+i} \, c_1 \, a_{2+i} \, b_{3+i} \, c_2  \quad
    (i = 0, 1, 2)
  $$
 where subscript addition is taken modulo\/ $3$.
\end{Lemma}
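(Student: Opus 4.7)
The plan is a direct verification by counting and matching. The hypergraph $\ktri{3,3,2}$ has exactly $3\cdot 3\cdot 2 = 18$ edges, while the three listed $6$-cycles together contribute $3\cdot 6 = 18$ consecutive triples; hence it suffices to show that every edge of $\ktri{3,3,2}$ occurs at least once, as the counts then force a decomposition.

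Since every edge of $\ktri{3,3,2}$ contains exactly one vertex of $C=\{c_1,c_2\}$, I would split the verification accordingly. In the $i$-th cycle $a_{1+i}\, b_{1+i}\, c_1\, a_{2+i}\, b_{3+i}\, c_2$, the three consecutive triples that contain $c_1$ (at positions $(1,2,3)$, $(2,3,4)$, $(3,4,5)$ of the cyclic sequence) contribute the $A\times B$ pairs $(a_{1+i},b_{1+i})$, $(a_{2+i},b_{1+i})$, $(a_{2+i},b_{3+i})$, i.e.\ the edges of the path $a_{1+i}\,b_{1+i}\,a_{2+i}\,b_{3+i}$. The remaining three triples contain $c_2$ and analogously contribute the edges of the path $a_{2+i}\,b_{3+i}\,a_{1+i}\,b_{1+i}$.

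By the observation made just before the lemma, these two families of paths, as $i$ ranges over $\{0,1,2\}$ modulo $3$, are precisely the two cyclic $P_4$-decompositions of $K_{3,3}$ recalled in the excerpt (with $a_\ell$ and $b_\ell$ playing the roles of $x_\ell$ and $y_\ell$). Each of these decompositions covers every edge of the bipartite graph on classes $A$ and $B$ exactly once. Hence, for each fixed $c_k$, the nine triples $\{a_i,b_j,c_k\}$ with $a_i\in A$, $b_j\in B$ are all realised, each exactly once, among the consecutive triples of the three $6$-cycles, which together with the edge count proves the claim.

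The main obstacle is purely bookkeeping: one must keep subscript arithmetic modulo $3$ consistent and correctly identify the two side paths extracted from each cycle with the two named $P_4$-decompositions of $K_{3,3}$. No deeper combinatorial difficulty arises, since the fact that those $P_4$-decompositions cover $K_{3,3}$ guarantees edge coverage on each side automatically.
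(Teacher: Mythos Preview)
Your verification is correct and follows exactly the line the paper suggests: the paper states the lemma without proof, merely indicating just before it that the construction is ``derived by combining the two cyclic $P_4$-decompositions $x_{1+i}y_{1+i}x_{2+i}y_{3+i}$ and $y_{1+i}x_{1+i}y_{3+i}x_{2+i}$'' of $K_{3,3}$. Your argument is precisely the natural fleshing-out of that remark, splitting the $18$ triples by which $c_k$ they contain and recognising the resulting $A$--$B$ pairs as the edges of those two $P_4$-decompositions.
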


Alternative constructions for Lemmas \ref{l:bi6}
 and \ref{l:tri8}, which have a less symmetric structure,
 can be found in Examples 2 and 5 of \cite{A-etal_2022}.

\bsk

\begin{Proofof}{Theorem \ref{t:c6}}

The case of $v=6$ is settled in Lemma \ref{l:bi6}.
Let now $v=12u+3s$, where $u\ge 1$ and $s=0,1,2$.
We view $\kvi$ as the result of substituting $4u+s$ triplets
 $A_1,\dots,A_{4u+s}$ into the vertices $x_1,\dots,x_{4u+s}$ of
 $\ktri{4u+s}$ (see Fig.~\ref{fig:4u_s}).
The sets $A_i$ will play the role of edges in the 1-factor whose
 omission from the edge set of $\kv$ yields $\kvi$.
Then each edge of $\kvi$ meets two or three of the sets $A_i$.
Those two types of edges will be covered with 6-cycles separately.

\begin{figure}
\centering
\begin{subfigure}{0.55\textwidth}
    \fbox{\includegraphics[width=\textwidth]{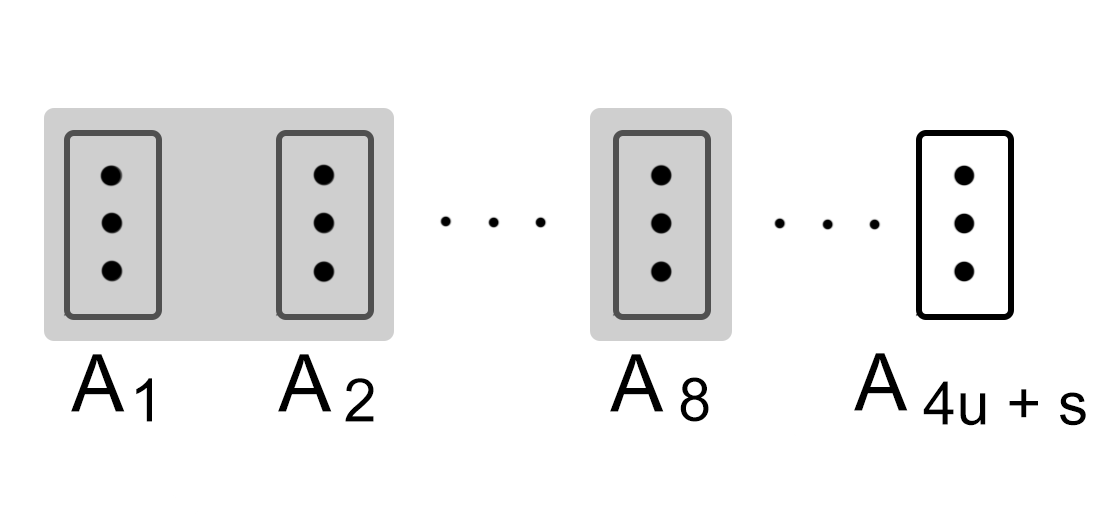}}
    \caption{The $4u+s$ triplets $A_1,\dots,A_{4u+s}$, with the example triplets $A_1, A_2, A_8$ highlighted.}
    \label{fig:4u_s_1}
\end{subfigure}
\hfill
\begin{subfigure}{0.30\textwidth}
    \fbox{\includegraphics[width=\textwidth]{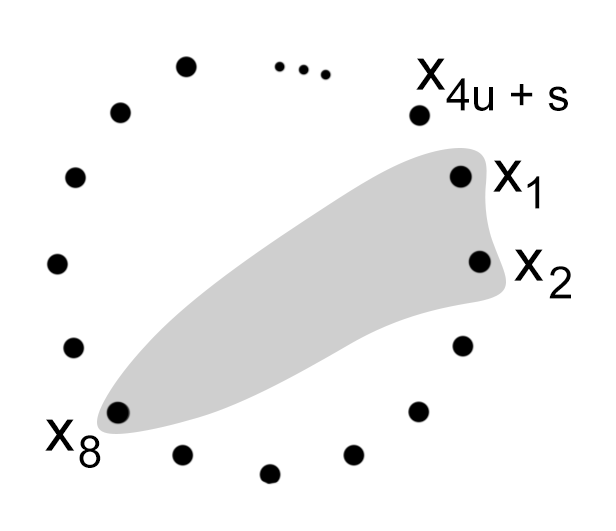}}
    \caption{The vertices $x_1,\dots,x_{4u+s}$ of $\ktri{4u+s}$. $A_1, A_2, A_8 \rightarrow x_1,x_2,x_8$}
    \label{fig:4u_s_2}
\end{subfigure}
\hfill
\caption{We view $\kvi$ as the result of substituting $4u+s$ triplets
 $A_1,\dots,A_{4u+s}$ into the vertices $x_1,\dots,x_{4u+s}$ of
 $\ktri{4u+s}$. An example with triplets $A_1, A_2, A_8$ and the corresponding three vertices  $x_1,x_2,x_8$ is highlighted.}
\label{fig:4u_s}
\end{figure}

\medskip

(a)\quad
For the edges meeting two of the $A_i$ we consider all pairs
 $i',i''$ with $1\leq i'<i''\leq 4u+s$, and apply Lemma \ref{l:bi6}
  to cover all triplets but $A_{i'}$ and $A_{i''}$ inside
  $A_{i'}\cup A_{i''}$.
This step creates three 6-cycles for each pair $i',i''$.

\medskip

(b)\quad
For the edges meeting three of the $A_i$ we take an $\chk$-decomposition
 of $\ktri{4u+s}$ as guaranteed by Theorem \ref{t:H2}.
Suppose that the triplets $x_ix_jx_{k'}$, $x_ix_jx_{k''}$ form a
 copy of $\chk$ in this decomposition (see Fig.~\ref{fig:H2_1}).
They represent the complete 3-partite hypergraph $\ktri{3,3,6}$
 whose vertex classes are $A_i$, $A_j$, and $A_{k'}\cup A_{k''}$.
We split $A_{k'}\cup A_{k''}$ into three pairs
 $C_{k_1},C_{k_2},C_{k_3}$, in this way decomposing $\ktri{3,3,6}$
 into three copies of $\ktri{3,3,2}$ (see Fig.~\ref{fig:H2_2}).
Now Lemma \ref{l:tri8} can be applied to find a
 6-cycle decomposition of the complete 3-partite hypergraphs whose
 vertex classes are $A_i$, $A_j$, and $C_{k_\ell}$ for $\ell=1,2,3$.
This step creates nine 6-cycles for each copy of $\chk$.

\begin{figure}
\centering
\begin{subfigure}{0.47\textwidth}
    \fbox{\includegraphics[width=\textwidth]{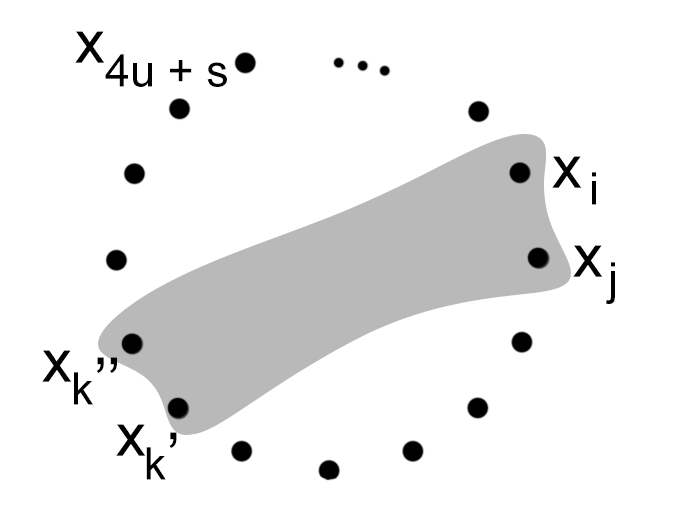}}
    \caption{The vertices  $x_i,x_j,x_{k'},x_{k''}$ that belong to a copy of $\chk$.}
    \label{fig:H2_1a}
\end{subfigure}
\hfill
\begin{subfigure}{0.47\textwidth}
    \fbox{\includegraphics[width=\textwidth]{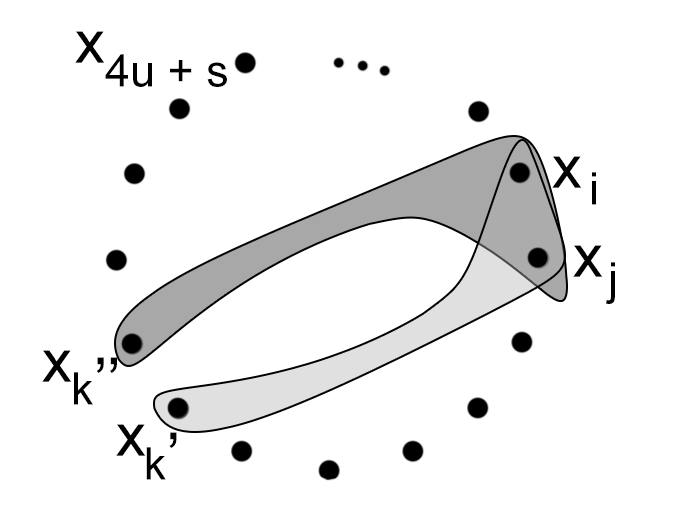}}
    \caption{The triplets (hyperedges)  $x_ix_jx_{k'}$, $x_ix_jx_{k''}$ forming a copy of  $\chk$.}
    \label{fig:H2_1b}
\end{subfigure}
\caption{Example $\chk$ formed by the triplets  $x_ix_jx_{k'}$, $x_ix_jx_{k''}$ in the decomposition.}
\label{fig:H2_1}
\end{figure}

\begin{figure}[t]
\centering
\begin{subfigure}{0.47\textwidth}
    \fbox{\includegraphics[width=\textwidth]{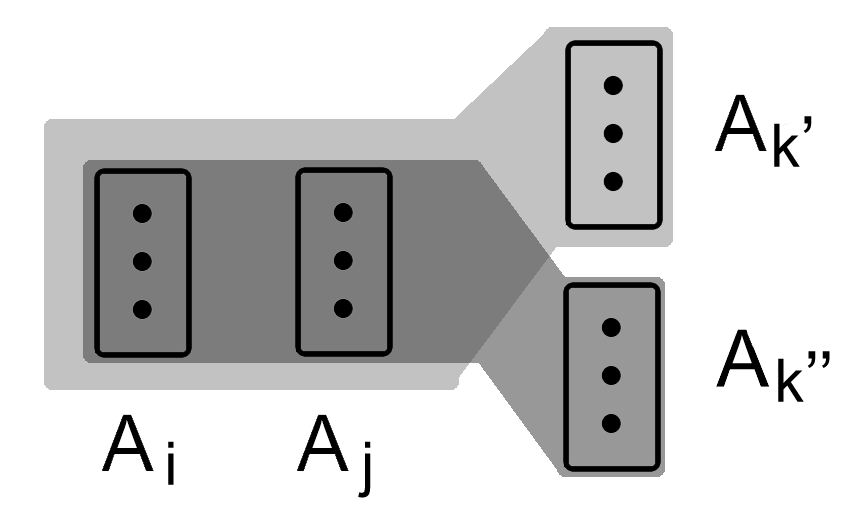}}
    \caption{$A_i$, $A_j$, $A_{k'}$ and $A_{k''}$ are the corresponding sets of the $\chk$, forming a $\ktri{3,3,6}$.}
    \label{fig:H2_2a}
\end{subfigure}
\hfill
\begin{subfigure}{0.47\textwidth}
    \fbox{\includegraphics[width=\textwidth]{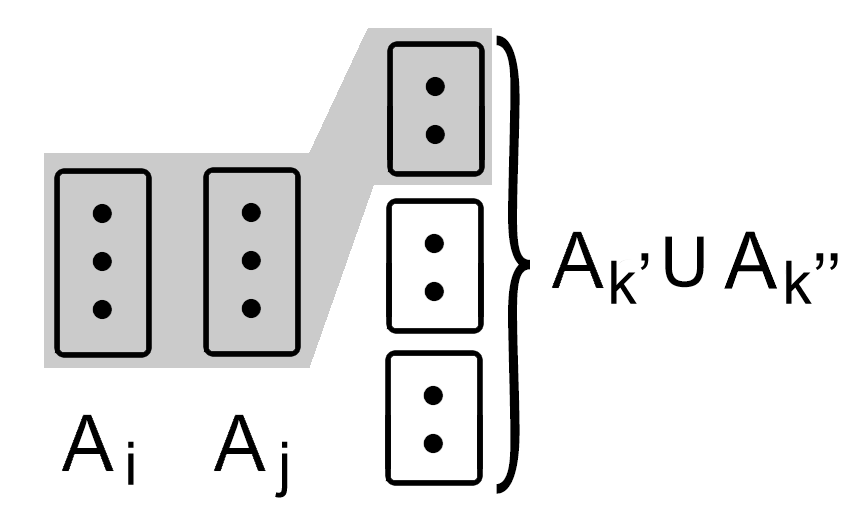}}
    \caption{The $\ktri{3,3,6}$ can be decomposed into three copies of $\ktri{3,3,2}$.}
    \label{fig:H2_2b}
\end{subfigure}
\hfill
\caption{A copy of $\chk$ represents the complete 3-partite hypergraph $\ktri{3,3,6}$. Continuing the example of Fig.~\ref{fig:H2_1}, the  vertex classes of the  $\ktri{3,3,6}$ are $A_i$, $A_j$ and  $A_{k'}\cup A_{k''}$. We split $A_{k'}\cup A_{k''}$ into three pairs, decomposing $\ktri{3,3,6}$ into three copies of $\ktri{3,3,2}$.}
\label{fig:H2_2}
\end{figure}

\medskip

These collections of 6-cycles decompose $\kvi$.
\qed

\end{Proofof}


\section{The spectrum of $\CCC{9}{v}$ systems}
\label{s:c9}

In this section we prove Theorem \ref{t:c9}.
Although its form is simpler than Theorem \ref{t:c6}, it requires more types of building blocks than the construction for 6-cycles, moreover a distinction between the three residue classes will also be needed.
Before the main part of the proof we give several constructions of 9-cycle decompositions of hypergraphs on a small number of vertices.

\begin{Lemma}
\label{l:k9c9}
There exists a decomposition of\/ $\kvv{9}$ into nine\/ $9$-cycles.
\end{Lemma}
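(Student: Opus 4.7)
My plan is a direct cyclic construction. Label the vertex set of $K_9^{(3)}$ by $\mathbb{Z}_9$ and take the $1$-factor to be $I = \{\{i, i+3, i+6\} : i = 0, 1, 2\}$. The translation action of $\mathbb{Z}_9$ on the $84$ triples of $\mathbb{Z}_9$ has orbits of size dividing $9$, so of size $1$, $3$, or $9$; and a triple is fixed by a non-trivial rotation precisely when its three elements form an arithmetic progression with common difference $3$. Hence $I$ itself is the unique short orbit, and the remaining $81$ triples partition into exactly nine orbits of size $9$ under $\mathbb{Z}_9$. This is a favourable bookkeeping: nine orbits versus nine required cycles.

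I therefore plan to realize the nine $9$-cycles as the $\mathbb{Z}_9$-translates $C_0, C_0+1, \ldots, C_0+8$ of a single base tight $9$-cycle $C_0$. For these translates to form a decomposition it suffices that the nine edges of $C_0$ represent nine distinct long orbits of $\mathbb{Z}_9$ (and none belong to $I$). Indeed, each long orbit would then receive exactly one edge from $C_0$ and hence exactly one edge from every translate, so that the nine translates together cover each edge of $K_9^{(3)}-I$ exactly once.

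The problem thus reduces to finding a Hamiltonian sequence $(v_1, \ldots, v_9)$ of $\mathbb{Z}_9$ whose nine consecutive sliding triples $\{v_i, v_{i+1}, v_{i+2}\}$ realize each of the nine non-trivial cyclic gap patterns
\[
(1,1,7),\ (1,2,6),\ (1,6,2),\ (1,3,5),\ (1,5,3),\ (1,4,4),\ (2,2,5),\ (2,3,4),\ (2,4,3)
\]
exactly once, and never realize the forbidden pattern $(3,3,3)$ corresponding to $I$. This interlocking matching between the vertex ordering and the prescribed multiset of edge types is the main obstacle, but the search space is small enough to resolve by inspection; for instance the starter $C_0 = (0, 2, 1, 5, 3, 8, 4, 7, 6)$ can be checked to realize precisely the nine patterns above, so the nine cyclic translates of $C_0$ form the required decomposition of $K_9^{(3)} - I$.
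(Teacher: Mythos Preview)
Your argument is correct and is essentially the same cyclic approach taken in the paper: both construct the nine $9$-cycles as the $\mathbb{Z}_9$-translates of a single base tight Hamiltonian cycle whose nine consecutive triples hit each of the nine long $\mathbb{Z}_9$-orbits exactly once. Your base cycle $(0,2,1,5,3,8,4,7,6)$ differs from the paper's $(0,1,2,7,6,4,8,5,3)$ but is equally valid, and your orbit-counting justification spells out explicitly why one base cycle suffices, which the paper leaves implicit.
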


\begin{Proof}
We construct a system with cyclic symmetry, composed from the following 9-cycles, subscript addition taken modulo 9:
 $$
   a_{1+i} \, a_{2+i} \, a_{3+i} \, a_{8+i} \, a_{7+i} \,
    a_{5+i} \, a_{9+i} \, a_{6+i} \, a_{4+i} \quad
    (i=0,1,\dots,8) \, .
 $$
These cycles cover all vertex triplets but $a_1a_4a_7$, $a_2a_5a_8$, and $a_3a_6a_9$.
\qed
\end{Proof}

\bsk

The method of the following construction works in a more general way also, for an infinite sequence of cycle lengths as shown in \cite{KT_2022+}; but here we only need its particular case yielding 9-cycles.

\begin{Lemma}
\label{l:9+3c9}
There exists a decomposition of\/ $\kpq{9}{3}$ into twelve\/ $9$-cycles.
\end{Lemma}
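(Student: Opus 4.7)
The edge count $\binom{9}{2}\cdot 3 = 108 = 12\cdot 9$ forces any such decomposition to consist of exactly twelve $9$-cycles. A structural observation sharpens this: in a tight $9$-cycle of $\kpq{9}{3}$, two $B$-vertices at cyclic distance less than three would place two $B$'s inside a triple of consecutive vertices, which is not an edge; and a direct degree count (each vertex lies in three cycle edges and every edge contains one $B$) shows that every $9$-cycle uses all three vertices of $B$. Hence each cycle must have the form
\[
  a_{i_1}\,a_{j_1}\,b_{k_1}\,a_{i_2}\,a_{j_2}\,b_{k_2}\,a_{i_3}\,a_{j_3}\,b_{k_3},
\]
with six distinct $A$-vertices and $\{k_1,k_2,k_3\}=\{0,1,2\}$.

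The plan is to exhibit twelve such cycles explicitly and verify that they partition the $108$ edges of $\kpq{9}{3}$. To cut the search down I would identify $A$ with $\mathbb{Z}_9$ and $B$ with $\mathbb{Z}_3$ and exploit the order-$3$ automorphism $\tau\colon a_i\mapsto a_{i+3},\; b_k\mapsto b_{k+1}$; since $\tau$ acts on $B$ as a $3$-cycle, no edge of $\kpq{9}{3}$ is fixed by $\tau$ or $\tau^2$, so every $\tau$-orbit of edges has length~$3$ and four base $9$-cycles with trivial $\tau$-stabiliser generate the required twelve cycles. A useful bookkeeping device guides the choice of base cycles: for each fixed $b\in B$, the edges of $\kpq{9}{3}$ containing $b$ form a copy of $K_9$ on $A$, and the three edges of a single $9$-cycle that meet $b$ form a path on four consecutive $A$-vertices. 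Hence a valid $9$-cycle decomposition induces, for every $b$, a decomposition of $K_9$ into twelve paths of length~$3$, an object known to exist by classical results on path-decompositions of complete graphs.

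The main obstacle is a compatibility condition: the three induced path-decompositions of $K_9$ (one for each $b\in B$) must be \emph{threadable} into $9$-cycles, meaning that for each cycle the three assigned paths share endpoints cyclically, namely $P_{k_1}=i_1 j_1 i_2 j_2$, $P_{k_2}=i_2 j_2 i_3 j_3$ and $P_{k_3}=i_3 j_3 i_1 j_1$. Imposing $\tau$-symmetry restricts this threading requirement to conditions on the four base cycles only, making a short guided search feasible. Once a suitable quadruple of base cycles is identified, the proof concludes with a routine mechanical check that the four $\tau$-orbits together cover every edge of $\kpq{9}{3}$ exactly once.
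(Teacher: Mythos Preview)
Your structural analysis is sound: every tight $9$-cycle in $\kpq{9}{3}$ must place the three $B$-vertices at cyclic distance~$3$, and for each fixed $b\in B$ the induced $A$-pairs form a $P_4$, so a decomposition projects to a $P_4$-decomposition of each $K_9$-copy. But the proposal stops at ``once a suitable quadruple of base cycles is identified'': you never identify one, so what you have is a search strategy, not a proof. The threading condition you isolate is exactly the non-trivial content of the lemma, and asserting that a short $\tau$-equivariant search will resolve it is not the same as resolving it.

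The paper sidesteps the threading problem by choosing a different auxiliary decomposition. Instead of three $P_4$-decompositions of $K_9$ that must be glued consistently, it starts from a single $C_6$-decomposition of $K_9$ (six graph $6$-cycles, known to exist since $9$ is odd and $6\mid\binom{9}{2}$), and from each $6$-cycle $a_1a_2a_3a_4a_5a_6$ writes down two hypergraph $9$-cycles
\[
a_1\,b_1\,a_2\,a_3\,b_2\,a_4\,a_5\,b_3\,a_6,
\qquad
b_2\,a_1\,a_2\,b_3\,a_3\,a_4\,b_1\,a_5\,a_6 .
\]
One checks directly that their $18$ edges are precisely the triples $\{a_i,a_j,b\}$ with $a_ia_j$ an edge of the $6$-cycle and $b\in B$; hence the six $6$-cycles produce twelve $9$-cycles covering every edge of $\kpq{9}{3}$ exactly once. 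The closed $6$-cycle does the threading for you: its cyclic shape forces the three induced $P_4$'s in the two derived $9$-cycles to overlap exactly the way your compatibility condition demands, so no search is needed.
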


\begin{Proof}
Let $A=\{a_1,\dots,a_9\}$ and $B=\{b_1,b_2,b_3\}$ be the two vertex
 classes of $\kpq{9}{3}$.
As mentioned in the introduction, cycle systems on ordinary complete
 graphs $K_v$ exist whenever $v$ is odd and the number of edges is
 divisible by the given cycle length.
In particular, $K_9$ with 9 vertices and 36 edges can be decomposed
 into six 6-cycle subgraphs.
We take this auxiliary decomposition over the vertex set $A$, and
 construct two 9-cycles in $\kpq{9}{3}$ for each of its graph 6-cycles.
Say, $a_1a_2a_3a_4a_5a_6 $ is one of the cycles in $K_9$.
We define the two 9-cycles
 $$
   a_1 \, b_1 \, a_2 \, a_3 \, b_2 \, a_4 \, a_5 \, b_3 \, a_6 \, ,\quad
   b_2 \, a_1 \, a_2 \, b_3 \, a_3 \, a_4 \, b_1 \, a_5 \, a_6 \, .
 $$
The 18 edges of size 3 in these two 9-cycles are precisely those
 triplets that consist of two consecutive vertices along the graph
 6-cycle and one vertex from $B$.
Hence, taking the same for all the six cycles in the decomposition
 of $K_9$, a required collection of 9-cycles is obtained in $\kpq{9}{3}$.
\qed
\end{Proof}

\bsk

A similarly symmetric construction cannot be expected for $\kpq{6}{3}$,
 nevertheless a 9-cycle decomposition still exists.

\begin{Lemma}
\label{l:6+3c9}
The five\/ $9$-cycles\hfill\break
\centerline{$a_1	 \, a_2	\, b_1 \, a_3 \, a_4 \, b_2 \, a_5 \, a_6 \, b_3$}
\centerline{$a_1 \, a_5	\, b_1 \, a_6 \, a_4 \, b_2 \, a_2 \, a_3 \, b_3$}
\centerline{$a_2 \, a_4 \, b_1 \, a_1 \, a_6 \, b_2 \, a_3 \, a_5 \, b_3$}
\centerline{$a_3 \, a_6 \, b_1 \, a_2 \, a_5 \, b_2 \, a_1 \, a_4 \, b_3$}
\centerline{$a_4 \, a_5 \, b_1 \, a_3 \, a_1 \, b_2 \, a_2 \, a_6 \, b_3$}
 decompose\/ $\kpq{6}{3}$ whose two vertex classes are\/
 $A=\{a_1,a_2,\dots,a_6\}$ and\/ $B=\{b_1,b_2,b_3\}$.
\end{Lemma}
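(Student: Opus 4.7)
The plan is a direct edge-by-edge verification. Since $\kpq{6}{3}$ has exactly $\binom{6}{2}\cdot 3 = 45$ edges and five tight $9$-cycles would contribute exactly $5\cdot 9 = 45$ edges (counted with multiplicity), the task reduces to two checks: that each listed sequence is indeed a tight $9$-cycle in $\kpq{6}{3}$, and that the $45$ edges thereby produced are pairwise distinct.

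The first check is immediate from inspection. Each of the five sequences uses every one of $a_1,\dots,a_6,b_1,b_2,b_3$ exactly once, so the nine vertices are distinct. Each sequence also follows the cyclic pattern $a\,a\,b\,a\,a\,b\,a\,a\,b$, and in such a pattern every window of three consecutive positions (indices mod $9$) contains exactly two vertices of type $a$ and one of type $b$; hence every such window is an edge of $\kpq{6}{3}$.

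The second check is where the computation sits. I would organize it by partitioning the $45$ edges of $\kpq{6}{3}$ into three classes according to which $b_j$ they contain; each class is in bijection with $\binom{A}{2}$ and so has exactly $15$ elements. For each cycle and each $j\in\{1,2,3\}$, the three windows meeting $b_j$ are precisely the three windows of the form $(\ast,\ast,b_j)$, $(\ast,b_j,\ast)$, $(b_j,\ast,\ast)$; if the four $a$-vertices surrounding $b_j$ along the cycle are $a_p,a_q,a_r,a_s$ in order, these three windows contribute the pairs $\{a_p,a_q\}$, $\{a_q,a_r\}$, $\{a_r,a_s\}$ from $\binom{A}{2}$. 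Summing over the five cycles gives $15$ pairs per $b_j$, and it remains to confirm that, for each $j$, these $15$ pairs are exactly $\binom{A}{2}$. I would tabulate the three lists directly from the five displayed sequences and check them off against the $15$ pairs $\{a_i,a_{i'}\}$ with $1\le i<i'\le 6$.

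The main obstacle is purely bookkeeping: because no fully symmetric construction exists (as emphasized just before the lemma), there is no automorphism reducing the three cases $j=1,2,3$ to a single one, so the three tables must be checked separately. Once they are written out, however, the verification is mechanical and each of the fifteen $a$-pairs is found in each of the three tables, completing the proof.
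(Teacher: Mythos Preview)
Your proposal is correct and matches the paper's treatment: the paper states the five cycles explicitly and offers no further proof, leaving the routine verification to the reader, which is exactly the edge-by-edge check you describe. Your organization by the three $b_j$-classes, reducing the question to checking that each of the three lists of fifteen $A$-pairs exhausts $\binom{A}{2}$, is a clean way to carry out that verification.
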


We also recall the following construction, which is derived from three Hamiltonian cycles of the complete graph $K_9$.

\begin{Lemma}[Bunge et al.~\cite{B-etal_2021}, Example 5]
\label{l:333c9} 
The three\/ $9$-cycles
 $$
   a_1 \, a_{1+i} \, a_{1+2i} \dots a_{1+8i} \quad (i=1,2,4)
 $$
 with subscript addition modulo\/ $9$ decompose\/ $\ktri{3,3,3}$
 whose vertex classes are\/ $\{a_1,a_4,a_7\}$,\/ $\{a_2,a_5,a_8\}$,
  and\/ $\{a_3,a_6,a_9\}$.
\end{Lemma}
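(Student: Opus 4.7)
The plan is a short arithmetic verification. Since $\ktri{3,3,3}$ has exactly $3\cdot 3\cdot 3 = 27$ edges while the three proposed cycles contribute $3\cdot 9=27$ triples in total, it is enough to check that every such triple is an edge of $\ktri{3,3,3}$ and that the $27$ triples are pairwise distinct.

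First, for each $i\in\{1,2,4\}$ the condition $\gcd(i,9)=1$ ensures that $a_1,a_{1+i},a_{1+2i},\ldots,a_{1+8i}$ is a Hamiltonian cyclic ordering of $\{a_1,\dots,a_9\}$, hence a genuine tight $9$-cycle. Its nine hyperedges are the triples of three consecutive terms, i.e.\ $\{a_x,a_{x+i},a_{x+2i}\}$ as $x$ runs through $\mathbb{Z}_9$. Because the vertex classes correspond to residues modulo $3$ and because $i\not\equiv 0\pmod 3$, the indices $x,x+i,x+2i$ form a $3$-term arithmetic progression in $\mathbb{Z}_3$ and therefore meet each of the three residue classes exactly once; so every triple in each of the three cycles is an edge of $\ktri{3,3,3}$.

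The main point is pairwise distinctness of the $27$ triples. Within a fixed cycle the triple $\{a_x,a_{x+i},a_{x+2i}\}$ is uniquely determined by its middle vertex $a_{x+i}$, so the nine triples are distinct. For two different parameters $i\neq j$ in $\{1,2,4\}$, I would use the fact that a $3$-element set $\{x,x+d,x+2d\}\subset\mathbb{Z}_9$ with $\gcd(d,9)=1$ has set of nonzero pairwise differences equal to $\{\pm d,\pm 2d\}$; hence if a triple is a $3$-term AP with two common differences $d,d'$, then $\{\pm d,\pm 2d\}=\{\pm d',\pm 2d'\}$. But the three candidate sets
\[
  \{\pm 1,\pm 2\}=\{1,2,7,8\},\ \
  \{\pm 2,\pm 4\}=\{2,4,5,7\},\ \
  \{\pm 4,\pm 8\}=\{1,4,5,8\}
\]
are pairwise distinct subsets of $\mathbb{Z}_9\setminus\{0\}$, so no triple appears in two of the cycles.

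The only step requiring more than routine bookkeeping is this final distinctness of difference sets, which is a two-line computation; the remainder is direct verification using $\gcd(i,9)=1$ and residues mod $3$.
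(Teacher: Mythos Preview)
Your verification is correct. The paper itself does not supply a proof of this lemma; it is quoted from Bunge et al.\ and left as a self-evident small construction (with the remark that it ``is derived from three Hamiltonian cycles of the complete graph $K_9$''). Your arithmetic argument via difference sets $\{\pm i,\pm 2i\}$ in $\mathbb{Z}_9$ is a clean and complete way to certify both that every triple lands in $\ktri{3,3,3}$ and that the $27$ triples are pairwise distinct, which is exactly what direct inspection would confirm.
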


It will be convenient to put these small structures together in some larger hypergraphs as follows.

\begin{Lemma}
\label{l:9+9c9}
All of the following types of hypergraphs admit decompositions
 into\/ $9$-cycles:
 \begin{itemize}
  \item[$(i)$] $\ktri{3p,3q,3r}$ for all\/ $p,q,r\ge 1$,
  \item[$(ii)$] $\kpq{3p}{3q}$ for all\/ $p\ge 2$ and\/ $q\ge 1$,
  \item[$(iii)$] $\crt{3p}{3q}$ for all\/ $p,q\ge 2$,
 \end{itemize}
where\/ $p,q,r$ denote integers.
\end{Lemma}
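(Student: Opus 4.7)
The plan is to build all three decompositions out of the atomic pieces already at hand: $\ktri{3,3,3}$ from Lemma~\ref{l:333c9}, $\kpq{9}{3}$ from Lemma~\ref{l:9+3c9} and $\kpq{6}{3}$ from Lemma~\ref{l:6+3c9}. I would prove the three items in the order $(i), (ii), (iii)$, each using the previous ones as a black box.

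For $(i)$, partition the three classes of sizes $3p$, $3q$, $3r$ into $p$, $q$, $r$ triples, respectively. Every choice of one triple from each class spans a copy of $\ktri{3,3,3}$, and these $pqr$ subhypergraphs are edge-disjoint and jointly exhaust the edge set of $\ktri{3p,3q,3r}$. Applying Lemma~\ref{l:333c9} to each copy yields the required 9-cycle decomposition.

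For $(ii)$, I would first settle the case $q=1$. The elementary arithmetic remark that every integer $p\ge 2$ admits a representation $p=2a+3b$ with $a,b\ge 0$ (the numerical semigroup $\langle 2,3\rangle$ contains everything from $2$ onward) lets me partition the class $A$ of size $3p$ into $a$ blocks of size $6$ and $b$ blocks of size $9$. Writing $B$ for the three-element class, each single block together with $B$ is a copy of $\kpq{6}{3}$ or $\kpq{9}{3}$, which Lemma~\ref{l:6+3c9} or Lemma~\ref{l:9+3c9} decomposes. The remaining edges use one $B$-vertex and two vertices from two distinct $A$-blocks; between each ordered pair of $A$-blocks such edges span a copy of $\ktri{6,6,3}$, $\ktri{6,9,3}$ or $\ktri{9,9,3}$, all handled by part~$(i)$. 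For general $q$, split $B$ into $q$ triples $B_1,\dots,B_q$ and apply the $q=1$ construction separately to each $B_j$; since every edge of $\kpq{3p}{3q}$ has its unique $B$-vertex in exactly one $B_j$, the resulting 9-cycle collections are edge-disjoint and together cover $\kpq{3p}{3q}$.

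Part $(iii)$ is then immediate: $\crt{3p}{3q}$ is the edge-disjoint union of $\kpq{3p}{3q}$ and $\kpq{3q}{3p}$, and the hypothesis $p,q\ge 2$ makes $(ii)$ applicable to both. The only nontrivial ingredient in the whole argument is the numerical representation $p=2a+3b$; if anything will need a moment of thought it is merely to check that the small cases $p=2,3$ are covered directly by a single block (so no cross-block edges arise), while for $p\ge 4$ the cross terms genuinely appear but are always of the form $\ktri{6,6,3}$, $\ktri{6,9,3}$, or $\ktri{9,9,3}$ and hence fall under part~$(i)$. Everything else is routine bookkeeping of which edges lie inside a block, across a pair of blocks, or across the bipartition.
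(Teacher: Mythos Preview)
Your proposal is correct and matches the paper's proof essentially line for line: the same reduction of $(i)$ to $pqr$ copies of $\ktri{3,3,3}$, the same $p=2a+3b$ block partition for $(ii)$ with cross-block edges handled by $(i)$, and the same split of $\crt{3p}{3q}$ into the two directed pieces for $(iii)$. The only slip is the phrase ``ordered pair of $A$-blocks''---it should be unordered, since the tripartite piece $\ktri{|A_i|,|A_j|,3}$ already contains all edges with one vertex in each of $A_i$, $A_j$, $B$---but this does not affect the argument.
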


\begin{Proof}
Concerning $(i)$, the following decomposition chain is easily seen:
 $$
   \ktri{3p,3q,3r} \longrightarrow r \times \ktri{3p,3q,3}
    \longrightarrow qr \times \ktri{3p,3,3}
    \longrightarrow pqr \times \ktri{3,3,3}
 $$
  and for $\ktri{3,3,3}$ we have a decomposition into three 9-cycles
   by Lemma \ref{l:333c9}.
Similarly in $(ii)$ we can do the step
 $\kpq{3p}{3q} \longrightarrow q \times \kpq{3p}{3}$.
Now we write $p$ in the form $p=2a+3b$, which is possible whenever
 $p\geq 2$.
(In fact $b=0$ or $b=1$ can always be ensured.)
Splitting the $3p$ vertices in the first class of $\kpq{3p}{3}$ into
 $a$ sets of size 6 and $b$ sets of size 9 we can proceed with the step
 $$
   \kpq{3p}{3} \longrightarrow 
	\left( a \times \kpq{6}{3} \right)
	\cup \left( b \times \kpq{9}{3} \right)
	\cup \left( ab \times \ktri{3,6,9} \right)
	\cup \left( \binom{a}{2}   \times \ktri{3,6,6} \right)
	\cup \left( \binom{b}{2}  \times \ktri{3,9,9} \right)
 $$
  completing the decomposition by Lemmas
  \ref{l:9+3c9} and \ref{l:6+3c9},
 also using $(i)$.
Finally, $(iii)$ is implied by
 $\crt{3p}{3q}  \longrightarrow \left( \kpq{3p}{3q} \right)
  \cup \left( \kpq{3q}{3p} \right)$ that can be done due to $(ii)$.
\qed
\end{Proof}

\bigskip

We shall need two further small cases for the general proof
 of Theorem~\ref{t:c9}.
The first one is $v=12$.

\begin{Lemma}
\label{l:12c9}
For\/ $v=12$ the two\/ $9$-cycles
 $$
   a_1 \, a_2 \, a_3 \, a_5 \, a_6 \, a_9 \, a_{11} \, a_4 \, a_7 \, ,\quad
   a_1 \, a_2 \, a_8 \, a_{12} \, a_3 \, a_{10} \, a_{11} \, a_7 \, a_9
 $$
 and their rotations modulo\/ $12$ form a decomposition of\/
 $\kvv{12}$, with uncovered (omitted) triplets\/ $a_1a_5a_9$,
 $a_2a_6a_{10}$, $a_3a_7a_{11}$, $a_4a_8a_{12}$.
\end{Lemma}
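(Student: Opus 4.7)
The plan is to exploit the $\mathbb{Z}_{12}$ symmetry of the construction. Let $\mathbb{Z}_{12}$ act on the vertex set $\{a_1,\dots,a_{12}\}$ by shifting subscripts modulo $12$; this induces an action on the set of unordered triples. Two triples lie in the same orbit if and only if they share the same \emph{cyclic gap type}: writing $\{i,j,k\}$ with $i<j<k$, the gap type is the cyclic sequence $(j-i,\,k-j,\,12+i-k)$ taken up to cyclic rotation, but \emph{not} reflection.

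First I would classify the orbits. A short Burnside calculation (or a direct count of compositions of $12$ into three positive parts modulo cyclic rotation) shows that the $\binom{12}{3}=220$ triples partition into $19$ orbits: exactly one orbit of size $4$, namely the gap type $(4,4,4)$, whose members are precisely $a_1a_5a_9$, $a_2a_6a_{10}$, $a_3a_7a_{11}$, $a_4a_8a_{12}$; together with $18$ ``generic'' orbits of size $12$, one for each cyclic-rotation class of a composition of $12$ into three positive parts other than $(4,4,4)$. These $18$ canonical gap types can be tabulated once and for all.

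Next, after checking that each base sequence uses nine distinct subscripts (so is a genuine tight $9$-cycle on $9$ of the $12$ vertices), I would list the nine consecutive triples of each base cycle, compute and canonicalize the gap type of each, and verify that the resulting $18$ gap types are pairwise distinct and all different from $(4,4,4)$. Granted this, the $12$ cyclic rotations of each base cycle produce $12$ distinct tight $9$-cycles, since a nontrivial shift cannot stabilize a cycle all of whose triples have trivial $\mathbb{Z}_{12}$-stabilizer and lie in distinct orbits. Applying all rotations to both base cycles therefore yields $24\times 9=216$ triple-incidences, and by the orbit accounting each of the $216$ non--parallel-class triples is covered exactly once. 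Since $\binom{12}{3}-4=216$, this is the required decomposition of $K_{12}^{(3)}-I$.

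The main obstacle is the routine but careful bookkeeping in the gap-type verification. The delicate point is that because we mod out only by cyclic rotation (not reflection), $(a,b,c)$ and $(a,c,b)$ represent \emph{different} orbits whenever $a,b,c$ are pairwise distinct; this distinction is essential for obtaining $18$ generic orbits (rather than a smaller number) and must be respected when canonicalizing, so that each orbit is seen to be hit by exactly one of the $18$ base triples.
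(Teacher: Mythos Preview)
Your proposal is correct and is essentially the same verification-by-orbit-representatives argument that the paper uses: the paper states this lemma without an in-text proof and relegates the check to the tables in Appendices~B and~D, where each base cycle's nine consecutive triplets are listed together with their canonical ``triplet type'' (the lexicographically smallest orbit representative containing vertex~$1$), and one sees that the $18$ types are pairwise distinct and avoid the parallel-class orbit. Your gap-type formulation is an equivalent canonical form, and your observation that one must mod out by cyclic rotation only (not reflection) is exactly the point that makes the count of $18$ generic orbits come out right.
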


As a final auxiliary step, we also need to handle the case of $v=15$ separately.

\begin{Lemma}
\label{l:15c9}
The hypergraph\/ $\kvv{15}$ admits a\/ $9$-cycle decomposition
 into\/ $50$ cycles.
\end{Lemma}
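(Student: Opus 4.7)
The arithmetic checks out: $\kvv{15}$ has $\binom{15}{3}-5=450=9\cdot 50$ edges. The difficulty is that no natural vertex partition of $V(\kvv{15})$ reduces the problem entirely to the building blocks of Lemmas~\ref{l:k9c9}--\ref{l:9+9c9}, so a hybrid correction is unavoidable.

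My plan is to partition $V(\kvv{15})=V_1\cup V_2$ with $|V_1|=9$ containing three triples of the $1$-factor and $|V_2|=6$ containing the remaining two. This splits the edges into three pieces: $\kvv{9}$ on $V_1$ ($81$ edges), $\crt{9}{6}$ between $V_1$ and $V_2$ ($351$ edges), and $\kvv{6}$ on $V_2$ ($18$ edges). Lemma~\ref{l:k9c9} handles the first piece in $9$ cycles and Lemma~\ref{l:9+9c9}(iii), applied with $p=3$, $q=2$, would normally handle the second in $39$ cycles. The $18$ residual edges of $\kvv{6}$ cannot be packed into tight $9$-cycles on their own, since $|V_2|=6<9$.

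To overcome this, I would construct $6$ \emph{hybrid} $9$-cycles, each with exactly three vertices in $V_1$ placed so that no three are cyclically consecutive on the cycle. A brief case analysis shows that such a cycle contains exactly three intra-$V_2$ edges (forming a tight path of length $3$ on $5$ of the $6$ $V_2$-vertices), six crossing edges, and no intra-$V_1$ edge. Since $3\cdot 6=18$ matches the intra-$V_2$ total, one can choose the six tight paths to partition the $18$ edges of $\kvv{6}$---for example, by taking the three $6$-cycles of the $\kvv{6}$-decomposition from Lemma~\ref{l:bi6} and splitting each into two tight $3$-paths. The six hybrids collectively use $36$ crossing edges; the remaining $315$ crossing edges then need to be covered by a further $35$ cycles rather than the full $39$ of Lemma~\ref{l:9+9c9}(iii), giving the final count $9+35+6=50$.

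The principal obstacle is the edge-matching between the hybrids and the crossing decomposition: one has to exhibit a $9$-cycle decomposition of $\crt{9}{6}$ minus the $36$ specific crossing edges used by the hybrids. Rather than retrofitting Lemma~\ref{l:9+9c9}(iii), the cleaner route is to design the hybrids and the crossing cycles in tandem, and to verify by a direct edge count that every non-$I$ triple of $V(\kvv{15})$ is covered exactly once. In keeping with the paper's style (cf.\ Lemmas~\ref{l:k9c9} and~\ref{l:12c9}), the final form of the proof is likely an explicit list of the $50$ cycles, possibly simplified by a small residual symmetry.
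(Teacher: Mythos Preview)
Your plan has a real gap, and you name it yourself: after the nine $\kvv{9}$ cycles on $V_1$ and the six hybrid cycles absorbing the $18$ edges of $\kvv{6}$, you are left needing a $9$-cycle decomposition of $\crt{9}{6}$ minus a specific set of $36$ crossing edges. Nothing in the earlier lemmas gives you this, and you defer to an unspecified explicit list. That is not a proof; it is a hope that a search would succeed. (Your structural claim about the hybrids is correct, incidentally: to get exactly three intra-$V_2$ edges and no intra-$V_1$ edge, the three $V_1$ vertices must sit at positions with cyclic gap pattern $(0,1,5)$, and then the intra-$V_2$ edges do form a tight $3$-path on five of the six $V_2$ vertices.)

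The paper sidesteps the matching problem entirely with a different partition and an overlap trick. Take $V=A\cup B\cup C$ with $|A|=|B|=6$ and $|C|=3$, arranging the $1$-factor as two triples inside $A$, two inside $B$, and $C$ itself. Place a $\kvv{9}$ system (Lemma~\ref{l:k9c9}) on $A\cup C$ with $C$ as one of its three omitted triples, and do the same on $B\cup C$. These two systems overlap in the vertex set $C$ but are edge-disjoint, since the unique triple inside $C$ is omitted from both. Together they supply $18$ cycles covering all intra-$A$, intra-$B$, $A$--$C$ and $B$--$C$ triples (minus the $1$-factor). What remains is precisely $\crt{6}{6}$ on $A,B$ ($20$ cycles) and $\ktri{6,6,3}$ on $A,B,C$ ($12$ cycles), both handled directly by Lemma~\ref{l:9+9c9}; total $18+20+12=50$. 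No hybrids, no residual matching.
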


\begin{Proof}
Let the vertex set be $A\cup B\cup C$, with $|A|=|B|=6$ and $|C|=3$.
Inside $A\cup C$ and also inside $B\cup C$ we take a copy of the
 9-cycle decomposition of $\kvv{9}$, where $C$ is an uncovered
 vertex triplet.
These 18 cycles cover all triplets
 inside $A$ and inside $B$---with the exception of two disjoint
 triplets in each---and
   also the $A$--$C$ and $B$--$C$ crossing triplets.
The $A$--$B$ crossing triplets can be covered by a decomposition of
 $\crt{6}{6}$ (20 cycles); and the triplets meeting all the three
  parts $A,B,C$ decompose as $\ktri{6,6,3}$ (12 cycles), applying Lemma \ref{l:9+9c9} for both cases.
\qed
\end{Proof}

\bigskip

Now we are in a position to prove Theorem \ref{t:c9}.

\bigskip

\noindent
\begin{Proofof}{Theorem \ref{t:c9}}

The three feasible residue classes $0,3,6$ modulo 9 will be treated separately.

\medskip

\noindent
{\sl Case 1:\/ $v\equiv 0$ (mod 9).}

For $v=9u$ let the vertex set be $A_1\cup \cdots \cup A_u$ with
 $|A_i|=9$ for all $1\leq i\leq u$.
Inside each $A_i$ we take a copy of the system given in
 Lemma \ref{l:k9c9}, a 9-cycle decomposition of $\kvv{9}$.
The family of triplets meeting an $A_i$ in two vertices and another
 $A_j$ in one vertex can be covered by the 9-cycle decomposition of
 $\crt{9}{9}$, as in Lemma \ref{l:9+9c9}.
Finally, the triplets meeting three distinct parts $A_i,A_j,A_k$
 form a copy of $\ktri{9,9,9}$ for any $1\leq i<j<k\leq u$, hence
 this type is also decomposable into 9-cycles by Lemma \ref{l:9+9c9}.

\medskip

\noindent
{\sl Case 2:\/ $v\equiv 3$ (mod 9).}

If $v=9u+3$, beside the sets $A_1,\dots,A_u$ with $|A_i|=9$ we
 also take an $A_0$ with $|A_0|=3$.
Now inside each $A_0\cup A_i$ we insert a copy of the $\kvv{12}$
 decomposition as in Lemma \ref{l:12c9}, in such a way that $A_0$
 is one of the uncovered triplets.
Hence the copies of $\kvv{12}$ for distinct $i$ are independent of
 each other.
Inside $A_1\cup \cdots \cup A_u$ we cover the triplets meeting
 more than one $A_i$ in the same way as we did in the case of
 $v=9u$.
Hence only those triplets remain to be covered that meet $A_0$ and
 two further $A_i,A_j$ ($1\leq i<j\leq u$).
For any fixed pair $i,j$ those triplets form a copy of $\ktri{9,9,3}$,
 thus they are decomposable into 9-cycles by Lemma \ref{l:9+9c9}.

\medskip

\noindent
{\sl Case 3:\/ $v\equiv 6$ (mod 9).}

If $v=9u+6$, beside the sets $A_1,\dots,A_u$ with $|A_i|=9$ we
 take an $A_0$ with $|A_0|=6$.
In this case $A_1$ will be treated differently from the other parts
 $A_i$, $i\geq 2$.
Then we take:
 \begin{itemize}
   \item $\kvv{15}$ inside $A_0\cup A_1$;
   \item $\kvv{9}$ inside each $A_i$ for $2\leq i\leq u$;
   \item $\crt{6}{9}$ between $A_0$ and each $A_i$ for $2\leq i\leq u$;
   \item $\ktri{6,9,9}$ with vertex classes $A_0,A_i,A_j$ for all
     $1\leq i<j\leq u$;
   \item $\crt{9}{9}$ with vertex classes $A_i,A_j$ for all
     $1\leq i<j\leq u$;
   \item $\ktri{9,9,9}$ with vertex classes $A_i,A_j,A_k$ for all
     $1\leq i<j<k\leq u$.
 \end{itemize}
The decompositions of these parts can be done according to the
 lemmas above, and they together decompose $\kvv{9u+6}$
 into 9-cycles.
\qed

\end{Proofof}


\section{2-split systems}
\label{sec:2_split_sys}

A \emph{$2$-split system} \cite{GMT_2020} is a system composed from two vertex-disjoint systems of order $v/2$ 
and a decomposition of the set of edges meeting both parts. In our context a 2-split system requires decompositions of
 $\kvv{v/2}$ and $\crt{v/2}{v/2}$.
In this section we prove that this can be done for all feasible
 residue classes; i.e., 2-split $\CCC{6}{v}$ and $\CCC{9}{v}$
 systems exist for all $v$ that admit $\CCC{6}{v/2}$ and
  $\CCC{9}{v/2}$ systems, respectively.
In fact the latter already follows from our previous lemmas.

\begin{Theorem}
For every\/ $v\equiv 0~(\mathrm{mod}~6)$, $v\geq 18$, there exists a\/ $2$-split\/ $9$-cycle decomposition of\/ $\kvi$.
\end{Theorem}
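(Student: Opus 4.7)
The plan is to decompose $\kvi$ into three edge-disjoint pieces whose union matches the definition of a $2$-split system, and to handle each piece with a result already proved earlier in the paper. Write $w = v/2$; the hypotheses $v\equiv 0\pmod 6$ and $v\geq 18$ translate to $3\mid w$ and $w\geq 9$. First I would partition the vertex set into two blocks $X,Y$ of size $w$ each and choose the $1$-factor $I$ so that every parallel edge of $I$ lies entirely inside $X$ or entirely inside $Y$; this is possible since $3\mid w$, so $X$ and $Y$ each admit a partition into $w/3$ triples. With this choice of $I$, the edges of $\kvi$ split cleanly as
$$
  \kvi \;=\; \bigl(\kvv{w}\ \text{on}\ X\bigr) \;\cup\; \bigl(\kvv{w}\ \text{on}\ Y\bigr) \;\cup\; \crt{w}{w},
$$
which is exactly the three-piece structure required for a $2$-split decomposition.

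Next I would apply Theorem \ref{t:c9} twice, once to each of the two in-part copies of $\kvv{w}$, which is legitimate because $w\geq 9$ and $3\mid w$; this yields two $\CCC{9}{w}$ systems. Writing $w = 3p$ with $p = w/3\geq 3$, I would then apply Lemma \ref{l:9+9c9}$(iii)$ with parameters $p=q$ to the crossing hypergraph $\crt{3p}{3p}$, obtaining a $9$-cycle decomposition of the crossing triplets. Taking the union of the three resulting families of $9$-cycles produces a $\CCC{9}{v}$ system that is by construction a $2$-split system.

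I do not anticipate a genuine obstacle: the lower bound $v\geq 18$ is tuned exactly to guarantee both the hypothesis $w\geq 9$ of Theorem \ref{t:c9} and the hypothesis $p\geq 2$ of Lemma \ref{l:9+9c9}$(iii)$, and the definition of a $2$-split system matches the three-piece decomposition displayed above term for term. The only bookkeeping point to verify is that the choice of $I$ can be made to respect the bipartition $(X,Y)$, and this follows immediately from $3\mid w$.
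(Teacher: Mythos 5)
Your proposal is correct and follows essentially the same route as the paper: both obtain the two in-part systems from Theorem~\ref{t:c9} applied to $\kvv{v/2}$ and the crossing part from Lemma~\ref{l:9+9c9}$(iii)$ applied to $\crt{3p}{3p}$ with $p=v/6\ge 3$. Your extra remark about choosing the $1$-factor to respect the bipartition is a detail the paper leaves implicit in the definition of a $2$-split system, but it is the same construction.
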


\begin{proof}
A $\CCC{9}{v/2}$ system with $v/2=3p$ exists for every $p\ge 3$
 by Theorem \ref{t:c9}, and a $\crt{3p}{3p}$ system exists
 by part $(iii)$ of Lemma \ref{l:9+9c9}.
\end{proof}

The construction of 2-split 6-cycle systems requires more work.
Along the way we shall also need a fundamental result on
 Kirkman triple systems.
Recall from the literature that a \emph{Kirkman triple system}
 of order $v$ is a collection of 3-element sets (blocks) such
 that each pair of elements belongs to exactly one block, and
 the set of block can be partitioned into $(v-1)/2$ so-called
  parallel classes, each of those classes consisting of $v/3$
 mutually disjoint blocks.

\begin{Theorem}[Ray-Chaudhuri, Wilson \cite{RCW_1971}]
\label{t:KTS} 
 For every\/ $v\equiv 3~(\mathrm{mod}~6)$ there exists a
 Kirkman triple system of order\/ $v$.
\end{Theorem}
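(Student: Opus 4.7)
The plan is to prove the theorem in the spirit of the original Ray--Chaudhuri--Wilson argument, combining explicit small-order constructions with two recursive tools. First I would settle the base cases $v=3$ (trivial), $v=9$ (take the four parallel classes of lines of the affine plane $\mathrm{AG}(2,3)$), and $v=15$ (the classical Kirkman schoolgirl solution, partitioning the $35$ triples of an $\mathrm{STS}(15)$ into seven parallel classes of five blocks each). These are all computable by hand or by invoking very well known designs.

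Next I would deploy the \emph{tripling construction}. Given a $\mathrm{KTS}(u)$ on a vertex set $U$, replace each point $x\in U$ by three copies $x^{(0)},x^{(1)},x^{(2)}$. For every block $\{x,y,z\}$ of the base KTS take the nine triples of the form $\{x^{(i)},y^{(j)},z^{(k)}\}$ with $i+j+k\equiv 0\pmod 3$, and adjoin the $u$ ``vertical'' triples $\{x^{(0)},x^{(1)},x^{(2)}\}$. The resulting system is an $\mathrm{STS}(3u)$; it is resolvable because each parallel class of the base KTS lifts to three parallel classes of the blown-up system (indexed by the common value of $i+j+k\bmod 3$), and the vertical triples form one additional parallel class. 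This already produces $\mathrm{KTS}(3^k)$ and, more generally, $\mathrm{KTS}(3u)$ from $\mathrm{KTS}(u)$.

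To reach every residue $v\equiv 3\pmod 6$ rather than just powers of $3$, I would use a \emph{Kirkman frame} of type $6^n$: a resolvable group-divisible design with $n$ groups of size $6$, block size $3$, and partial parallel classes each missing exactly one group. Filling every group together with a common ``infinity point'' via a copy of $\mathrm{KTS}(7)$ would collapse on parity, so instead I would adjoin three infinity points and fill each group-plus-infinities with a $\mathrm{KTS}(9)$ from the base case; the frame's partial parallel classes combine with the fill-in classes to give a resolution of order $6n+3$. This yields $\mathrm{KTS}(v)$ for all $v\equiv 3\pmod 6$ with $v\geq 21$, and the smaller orders $v=3,9,15$ are already handled.

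The main obstacle is the existence of the Kirkman frames of type $6^n$ for every $n\geq 2$, which is itself a nontrivial design-theoretic theorem requiring a further layer of recursion through transversal designs and pairwise balanced designs, together with a small number of ad hoc constructions for exceptional $n$. Once this auxiliary existence is in hand, the assembly into a $\mathrm{KTS}(v)$ for every admissible $v$ is routine bookkeeping.
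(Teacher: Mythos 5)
This statement is not proved in the paper at all: it is the Ray--Chaudhuri--Wilson solution of Kirkman's schoolgirl problem, imported verbatim from \cite{RCW_1971} and used as a black box in Section~\ref{sec:2_split_sys}. So there is no in-paper argument to compare yours against; the only question is whether your sketch would stand on its own, and as written it does not. The decisive step---the existence of Kirkman frames of type $6^n$---is itself a substantial theorem (it belongs to Stinson's later frame machinery rather than to the original 1971 argument, which ran through pairwise-balanced-design closure), and you explicitly defer it to ``a further layer of recursion''; a proof that outsources its hardest ingredient to an unproved auxiliary existence claim is a reduction, not a proof. Worse, the auxiliary claim as you state it is false: Kirkman frames of type $6^n$ with block size $3$ do not exist for $n=2$ or $n=3$ (with two groups every $3$-subset contains a within-group pair, so no $\{3\}$-GDD of type $6^2$ exists at all; with three groups every block is a transversal and therefore cannot avoid a group, so no partial parallel class can miss a group). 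The recursion thus only starts at $n\ge 4$, i.e.\ $v\ge 27$, and $v=21$ must be added to your base cases.

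There is also a concrete slip in the tripling step. All nine retained triples $\{x^{(i)},y^{(j)},z^{(k)}\}$ of a base block satisfy $i+j+k\equiv 0\pmod 3$ by construction, so the three parallel classes cannot be ``indexed by the common value of $i+j+k$''---that value is identically $0$. You need a different invariant, e.g.\ $i-j\bmod 3$, which does split the nine triples of each block into three transversals of the nine lifted points and hence splits each lifted base class into three classes of the resulting $\mathrm{STS}(3u)$; with that repair the tripling construction is correct, but it only reaches orders $3u\equiv 9\pmod{18}$ and so cannot replace the frame machinery. For the purposes of this paper none of this is needed: citing \cite{RCW_1971} (or any modern textbook treatment of Kirkman triple systems) is the intended and sufficient justification.
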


We shall also use the following small construction.

\begin{Lemma}[Akin et al.~\cite{B-etal_2021}, Example 3]
\label{l:66c6} 
There is a decomposition of\/ $\crt{6}{6}$ into\/ $6$-cycles.
\end{Lemma}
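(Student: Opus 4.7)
The plan is to construct an explicit decomposition of $\crt{6}{6}$ into $30$ tight $6$-cycles, which is the correct count since $\crt{6}{6}$ has $2 \cdot 6 \cdot \binom{6}{2} = 180$ edges and each tight $6$-cycle uses $6$ of them.

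I would exploit the diagonal $\mathbb{Z}_6$-symmetry sending $a_i \mapsto a_{i+1}$ and $b_j \mapsto b_{j+1}$. Under this action the edges of $\crt{6}{6}$ split into exactly $30$ orbits of length $6$: among the $90$ AAB-type triples $\{a_i, a_j, b_k\}$ one finds $6 + 6 + 3 = 15$ orbits, parametrized by the pair-difference $d = j - i \in \{1,2,3\}$ and the $B$-offset $k - i \pmod 6$ (with a halved orbit count at $d = 3$, since $\{a_0,a_3\} = \{a_3,a_6\}$), and symmetrically $15$ orbits among the $90$ ABB-type triples. A $\mathbb{Z}_6$-symmetric cycle decomposition therefore amounts to selecting $5$ base $6$-cycles whose $30$ edges together represent each of these $30$ orbits exactly once.

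The natural choice is to use five cycles of the mixed pattern AABABB, since each such cycle contributes $3$ AAB-edges and $3$ ABB-edges, automatically giving the balanced total. Starting from a sample base cycle such as $a_0 \, a_1 \, b_0 \, a_2 \, b_1 \, b_2$ one computes directly that it covers the AAB-orbits $(1,0), (1,5), (1,2)$ and the ABB-orbits $(1,2), (1,1), (1,5)$. Four further base cycles are then chosen greedily so as to cover the remaining $12$ AAB-orbits and $12$ ABB-orbits without overlap.

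The main obstacle is the combinatorial bookkeeping: one must verify that five compatible base cycles exist whose vertices are distinct and whose $30$ edges partition the orbit set. This is a finite search through AABABB-patterns and can be completed by hand; alternatively one can simply transcribe the explicit $30$-cycle list supplied as Example~3 of \cite{B-etal_2021}. Should the purely cyclic route run into difficulty, a fallback is to split $\crt{6}{6} = \kpq{6}{6} \cup \kpq{6}{6}$ along edge-type and decompose each half separately into $15$ cycles of the pure patterns AABAAB and ABBABB; this loses the $\mathbb{Z}_6$-symmetry (since $15$ is not divisible by $6$), but reduces the problem to two structurally easier subproblems.
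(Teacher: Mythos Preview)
Your approach differs from the paper's. The paper does not build anything from scratch: it quotes the explicit decomposition from the cited reference, which takes the vertex set to be $\mathbb{Z}_{12}$ with the bipartition into even and odd residues and exploits the full cyclic $\mathbb{Z}_{12}$-action $j\mapsto j+1$. Under that stronger symmetry only three base cycles are required --- $(0,5,10,8,11,2)$ and $(0,1,9,4,3,7)$, each with a full orbit of length~$12$, together with $(0,1,2,6,7,8)$, which is stabilised by $j\mapsto j+6$ and so has orbit length~$6$ --- and all three are written out explicitly, giving $12+12+6=30$ cycles.

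Your diagonal $\mathbb{Z}_6$-action $a_i\mapsto a_{i+1}$, $b_j\mapsto b_{j+1}$ is precisely the index-$2$ subgroup (the even shifts) of that $\mathbb{Z}_{12}$, so your scheme needs five base cycles instead of three. Your orbit bookkeeping is correct and the sample cycle $a_0a_1b_0a_2b_1b_2$ is fine, but you then stop: the remaining four base cycles are delegated to a ``greedy'' search that you assert terminates but never carry out, and the fallback via two copies of $\kpq{6}{6}$ is likewise left unexecuted. That is the gap --- not a wrong idea, but an unfinished one. To close it you must either list all five base cycles with a table showing that their $30$ edge-orbits are pairwise distinct, or, more economically, pass to the $\mathbb{Z}_{12}$ picture where the three base cycles are already on record and the verification is half as long.
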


\begin{proof}
The construction in \cite{B-etal_2021} takes $\mathbb{Z}_{12}$
 as vertex set, and defines 30 cycles derived from two cycles
 $(0, 5, 10, 8, 11, 2)$ and $(0, 1, 9, 4, 3, 7)$ turning them
 into 12 positions via the mappings $j \mapsto j+i$ (mod 12)
 for $i\in \mathbb{Z}_{12}$, and from a third cycle
 $(0, 1, 2, 6, 7, 8)$ turned into 6 positions via
 $j \mapsto j+i$ (mod 12) for $i=0,1,\dots,5$.
In our notation this corresponds to $a_i=2i-2$ and $b_i=2i-1$
 for $i=1,2,\dots,6$.
\end{proof}

The spectrum of $\CCC{6}{v}$ systems can now be determined.

\begin{Theorem}
 For every\/ $v\equiv 0,6,12~(\mathrm{mod}~24)$, $v\geq 12$, there exists a\/ $2$-split\/ $6$-cycle decomposition of\/ $\kvi$.
\end{Theorem}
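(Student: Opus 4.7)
The plan is to build both half-systems via Theorem~\ref{t:c6} and then decompose the crossing hypergraph $\crt{v/2}{v/2}$ into tight $6$-cycles. Since $v\equiv 0,6,12\ (\mathrm{mod}\ 24)$ is equivalent to $v/2\equiv 0,3,6\ (\mathrm{mod}\ 12)$, Theorem~\ref{t:c6} yields a $\CCC{6}{v/2}$ decomposition on each half, so the whole task reduces to decomposing $\crt{v/2}{v/2}$, and I would split this according to the residue of $v/2$ modulo $6$.

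If $v\equiv 0,12\ (\mathrm{mod}\ 24)$, then $v/2=6s$ is a multiple of $6$. Partition $A=A_1\cup\cdots\cup A_s$ and $B=B_1\cup\cdots\cup B_s$ with $|A_i|=|B_j|=6$. The edges of $\crt{v/2}{v/2}$ split into the $s^2$ copies of $\crt{6}{6}$ living inside each $A_i\cup B_j$ (handled by Lemma~\ref{l:66c6}) plus copies of $\ktri{6,6,6}$ with vertex classes $(A_i,A_{i'},B_j)$ or $(A_i,B_j,B_{j'})$. Each $\ktri{6,6,6}$ is decomposed by first splitting one class into three pairs (producing three copies of $\ktri{6,6,2}$) and then splitting each of the other two classes into halves (producing four copies of $\ktri{3,3,2}$ per $\ktri{6,6,2}$), each of which yields three $6$-cycles by Lemma~\ref{l:tri8}.

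If $v\equiv 6\ (\mathrm{mod}\ 24)$, then $v/2=6m+3$ and I would use Theorem~\ref{t:KTS} to fix a Kirkman triple system on each of $A$ and $B$, yielding parallel classes $\mathcal{P}_1^A,\ldots,\mathcal{P}_{3m+1}^A$ and $\mathcal{P}_1^B,\ldots,\mathcal{P}_{3m+1}^B$ of $2m+1$ disjoint triplets each. The base building block is a $6$-cycle decomposition of $\crt{3}{3}$ on vertex classes $\{a_1,a_2,a_3\}$ and $\{b_1,b_2,b_3\}$; for instance the three cycles
$$
a_1\,b_1\,a_2\,b_2\,a_3\,b_3,\quad a_1\,b_2\,a_3\,b_1\,a_2\,b_3,\quad a_1\,b_2\,a_2\,b_3\,a_3\,b_1
$$
cover the $18$ edges of $\crt{3}{3}$ exactly once (a short enumeration). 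Then for every index $i\in\{1,\ldots,3m+1\}$ and every pair $(T^A,T^B)\in\mathcal{P}_i^A\times\mathcal{P}_i^B$, apply this decomposition to the copy of $\crt{3}{3}$ on $T^A\cup T^B$. An edge of $\crt{v/2}{v/2}$ with two $A$-vertices has its pair inside a unique KTS block $T_0^A\in\mathcal{P}_{i_0}^A$, while its $B$-vertex lies in a unique block of $\mathcal{P}_{i_0}^B$, so the edge is picked up exactly by the block-pair $(T_0^A,T^B)$ at the index $i=i_0$; the symmetric claim handles edges with two $B$-vertices.

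The main obstacle is the second sub-case: one must verify that the KTS-based scheme hits every edge of $\crt{v/2}{v/2}$ exactly once, with no overlap and no omission, and this reduces to the defining KTS property that each pair of points lies in exactly one block. Establishing the $\crt{3}{3}$ decomposition is a routine finite check, but it must be added as an auxiliary lemma since it is not among the lemmas already proved in the excerpt.
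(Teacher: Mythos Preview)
Your proof is correct and follows essentially the same route as the paper: the case $v\equiv 0,12\pmod{24}$ is handled exactly as you describe, and for $v\equiv 6\pmod{24}$ the paper also pairs parallel classes of Kirkman triple systems on the two halves and decomposes the crossing triplets block-pair by block-pair. The one point you may have missed is that your ``new'' auxiliary lemma is already available: since the only triples of $K_6^{(3)}$ not crossing the bipartition $\{a_1,a_2,a_3\}\cup\{b_1,b_2,b_3\}$ are the two parts themselves, $\crt{3}{3}$ is literally $\kvv{6}$ with $1$-factor $\{\{a_1,a_2,a_3\},\{b_1,b_2,b_3\}\}$, and Lemma~\ref{l:bi6} gives exactly the decomposition you need.
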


\begin{proof}
Consider first the case of residue classes 0 and 12, i.e.\
 where $v$ is of the form $v=12p$.
We know from Theorem \ref{t:c6} that a $\CCC{6}{6p}$ system
 exists for every $p\ge 1$.
Let now $A_1\cup \cdots \cup A_p$ and $B_1\cup \cdots \cup B_p$
 be the vertex sets of two such systems, where the $A_i$ and
 $B_i$ are mutually disjoint 6-element sets.
For all $1\le i,j \le p$ we take decompositions of $\crt{6}{6}$
 as given in Lemma \ref{l:66c6}, for the crossing triplets
 in $A_i\cup B_j$.
It remains to cover the triplets that meet two subsets on
 one side of $\CCC{6}{6p}$ and one subset on the other side.
For any three of those 6-element sets we can apply the
 decomposition chain
 $$
   \ktri{6,6,6} \longrightarrow 2 \times \ktri{3,6,6}
    \longrightarrow 4 \times \ktri{3,3,6}
    \longrightarrow 12 \times \ktri{3,3,2}
 $$
  and find a decomposition according to Lemma \ref{l:tri8}.

For the third residue class $v=24p+6$ the construction starts
 with two copies of a $\CCC{6}{12p+3}$ system, say over the
 disjoint sets $A'$ and $A''$, guaranteed by Theorem \ref{t:c6}.
Apply Theorem \ref{t:KTS} to find Kirkman triple systems over
 $A'$ and $A''$, to be used as auxiliary tools.
We denote by $F'_1,\dots,F'_{6p+1}$ and $F''_1,\dots,F''_{6p+1}$
 the corresponding parallel classes.
Then for $i=1,\dots,6p+1$, for all $T'\in F'_i$ and $T''\in F''_i$
 we take a decomposition of $\kvv{6}$ whose missing edges are
 $T'$ and $T''$ (three 6-cycles for each pair
 $T',T''$) as given in Lemma \ref{l:bi6}.
The collection of those cycles covers all crossing triplets
 exactly once.
Indeed, a triplet $T$ meeting both $A'$ and $A''$ has two vertices on
 one side; those two vertices are contained in a unique block of the
 Kirkman triple system on that side; and the block uniquely determines the index $i$
 of $F'_i \cup F''_i$ in which the block occurs; hence $T$ is
 contained in a single well-defined set $T'\cup T''$ and appears
  in just one of its 6-cycles.
Consequently a $\CCC{6}{24p+6}$ system is obtained.
\end{proof}



\section{Concluding remarks}
\label{sec:concl}

In this note we determined the spectrum of 6-cycle decompositions
 and 9-cycle decompositions of nearly complete 3-uniform hypergraphs
 $\kvi$.
This problem is now completely solved.
On the other hand it would be of interest to find decompositions
 satisfying some further structural requirements. 
Besides the 2-split systems discussed in Section \ref{sec:2_split_sys} we would like to mention cyclic systems as well. 
In a cyclic system, with the vertex set $\zzv$,  the mapping $i\mapsto i+1$ (mod~$v$) is an automorphism of the decomposition.
The missing triplets are $(i,i+v/3,i+2v/3)$,  because $i\mapsto i+1$ has to be an automorphism of the complement of $\kvi$, too.
Concerning cyclic cyctems we formulate the following open problems.

\begin{Conjecture} ~~~
\label{conj}
 \begin{itemize}
  \item[$(i)$] For every\/ $v\equiv 0,3,6~(\mathrm{mod}~12)$, $v\geq 6$,
   there exists a cyclic\/ $6$-cycle decomposition of\/ $\kvi$.
  \item[$(ii)$] For every\/ $v\equiv 0,6,12~(\mathrm{mod}~24)$, $v\geq 12$,
   there exists a cyclic\/ $2$-split\/ $6$-cycle decomposition of\/ $\kvi$.
  \item[$(iii)$] For every\/ $v\equiv 0~(\mathrm{mod}~3)$, $v\geq 9$,
   there exists a cyclic\/ $9$-cycle decomposition of\/ $\kvi$.
  \item[$(iv)$] For every\/ $v\equiv 0~(\mathrm{mod}~6)$, $v\geq 18$,
   there exists a cyclic\/ $2$-split\/ $9$-cycle decomposition of\/ $\kvi$.
 \end{itemize}
\end{Conjecture}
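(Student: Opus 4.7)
The plan is to adopt the standard difference-family framework for cyclic designs on $\zzv$ and search for families of base tight $k$-cycles whose $\zzv$-orbits partition the non-diagonal triples. Take the vertex set $\zzv$ with generator $i\mapsto i+1$; every orbit of $3$-subsets has length $v$ except the diagonal orbit $\{\{i,i+v/3,i+2v/3\}:i\in\zzv\}$ of length $v/3$, which coincides with the removed $1$-factor $I$. Hence a cyclic $\CCC{k}{v}$ system is precisely a collection of \emph{base} tight $k$-cycles whose edge-orbits partition all non-diagonal triple-orbits. A base cycle whose shift-stabilizer in $\zzv$ has order $s$ contributes $kv/s$ distinct edges and $v/s$ distinct cycles; generically $s=1$, but stabilizers of order $2$ (shift by $v/2$ combined with a half-turn of the cycle) are available for $k$ even, and of order $3$ (shift by $v/3$ combined with a $3$-fold rotation of the cycle) when $3\mid k$.

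My first step is a residue-by-residue counting check, since the number $v(v-3)/(6k)$ of base cycles is not automatically an integer. For part $(i)$ this equals $v(v-3)/36$, which is an integer for $v\equiv 0,3\pmod{12}$ but a half-integer for $v\equiv 6\pmod{12}$; the latter case forces an odd number of order-$2$ stabilizer cycles in every admissible system. For part $(iii)$ the count $v(v-3)/54$ is an integer for $v\equiv 0,3\pmod 9$ but not for $v\equiv 6\pmod 9$, in which case order-$3$ stabilizer cycles---built by concatenating a triple $(u_1,u_2,u_3)$ with its shifts by $v/3$ and $2v/3$---must be included. Once these invariants are pinned down, the constructive step becomes a difference-pair search: each base cycle $(u_1,\dots,u_k)$ encodes $k$ ordered pairs $(u_{i+1}-u_i,u_{i+2}-u_i)\in\zzv^2$, the non-diagonal triple-orbit representatives are classified by such pairs modulo the $S_3$-symmetry of $3$-subsets, and the task is to partition the representatives into the $k$-tuples that close up to tight cycles (full-orbit or short-orbit). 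For small $v$ a direct construction or short computer search should handle all cases; for large $v$ in part $(iii)$ I would attempt a recursive construction exploiting the subgroup $\langle v/9\rangle\cong\mathbb{Z}_9$ of $\zzv$, combining the $\mathbb{Z}_9$-symmetric cycles of Lemma \ref{l:k9c9} on the cosets with cyclic analogues of Lemma \ref{l:9+9c9} for $\crt{9}{9}$ and $\ktri{9,9,9}$ (which would themselves need to be proved with the required rotational symmetry, since the constructions in Section \ref{s:c9} do not automatically produce cyclic decompositions). Parts $(ii)$ and $(iv)$ impose an additional $2$-split constraint; the natural candidate is the parity partition of $\zzv$, which is swapped by the shift $+1$, so the base cycles would have to come in shift-related pairs respecting this swap.

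The principal obstacle is expected to be part $(ii)$ for $v\equiv 6\pmod{24}$: the non-cyclic proof in Section \ref{sec:2_split_sys} invokes Theorem \ref{t:KTS} to organize the crossing triples via Kirkman triple systems on the two halves, and a cyclic realization requires either a \emph{cyclic} KTS (whose spectrum is strictly more restrictive than that of general KTS's) or a direct cyclic difference family for the crossing triples, both demanding a fresh construction. A secondary obstacle is the consistent placement of the compulsory short-orbit base cycles---order-$2$ in part $(i)$ for $v\equiv 6\pmod{12}$, order-$3$ in part $(iii)$ for $v\equiv 6\pmod 9$---alongside the remaining full-orbit cycles across the entire residue class; I would handle it by first constructing templates by direct search at small $v$ (say $v\in\{6,15,18,24,30\}$) and then trying to infer a uniform parameterization, verifying it via the difference-pair accounting above.
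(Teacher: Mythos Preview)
The statement you are addressing is labelled a \emph{Conjecture} in the paper and is explicitly described there as an open problem. The paper does not prove it; in support of the conjecture it only provides (a) the observation in the subsequent Remark about short-orbit base cycles being forced when $v\equiv 6\pmod{12}$ (for $k=6$) or $v\equiv 6\pmod{9}$ (for $k=9$), and (b) explicit base cycles for all feasible $v\le 30$, obtained by a mixture of hand construction and computer search, tabulated in the appendices. No general construction, recursion, or parameterized family is offered.

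Your proposal correctly sets up the difference-family framework, correctly identifies the short-orbit constraint (which matches the paper's Remark), and correctly flags the Kirkman obstacle in part~$(ii)$. However, it is a research \emph{plan}, not a proof: the constructive steps are all stated in the conditional (``should handle'', ``I would attempt'', ``would have to come in pairs''), no base cycle is actually exhibited, and the proposed recursion via the subgroup $\langle v/9\rangle$ is speculative and would itself require new cyclic versions of the auxiliary lemmas. Since the paper itself leaves the conjecture open and supplies only finitely many sporadic instances, there is no proof to compare against, and your proposal does not close the gap either---it outlines where the difficulties lie but does not overcome them.
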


Examples of cyclic 9-cycle decompositions have already been given in
 Lemmas \ref{l:k9c9}, \ref{l:333c9}, and \ref{l:12c9} above.
In further support of Conjecture \ref{conj}, in the two
 subsections below we list the base cycles generating cyclic systems
  $\CCC{6}{v}$ and $\CCC{9}{v}$ for all feasible values of $v\le 30$.
Those systems have been found via a combination of intuitively
 pre-defined base cycles and partial computer search.
In order to facilitate checking that those are decompositions
 indeed, detailed tables are presented in
  Appendices A and C for 6-cycles and
   Appendices B and D for 9-cycles.

\begin{Remark}
Under the mapping\/ $i\mapsto i+1$ the number of orbits of
 triplets other than\/ $(i,i+v/3,i+2v/3)$ is\/ $\frac{1}{6}v(v-3)$.
This is not divisible by\/ $6$ if\/ $v=12p+6$, and not divisible
 by\/ $9$ if\/ $v=9p+6$.
In those residue classes, one ``exceptional'' base cycle has an
 automorphism\/ $i\mapsto i+v/2$ for\/ $6$-cycles and\/
 $i\mapsto i+v/3$ for\/ $9$-cycles.
\end{Remark}

\subsection{Base cycles of cyclic 6-cycle decompositions}
\label{ss:cikl6}

\medskip The cyclic system $\CCC{6}{6}$ has 1 base cycle: \\
(0,1,4,5,2,3) (exceptional).\\

\noindent\begin{tabular}{@{}llll}
\multicolumn{4}{l}{The cyclic system $\CCC{6}{12}$ has 3 base cycles:} \\
(0,1,2,4,5,8),&(0,1,5,8,3,6),&(0,1,9,3,5,7).\\
\end{tabular}
\medskip

\noindent\begin{tabular}{@{}llll}
\multicolumn{4}{l}{The cyclic system $\CCC{6}{15}$  has 5 base cycles:} \\
(0,1,2,4,5,8),& (0,1,5,3,8,6),& (0,1,7,3,6,10),& (0,1,9,14,7,11),\\ 
(0,3,12,6,2,8). \\
\end{tabular}
\medskip

\noindent\begin{tabular}{@{}llll}
\multicolumn{4}{l}{The cyclic system $\CCC{6}{18}$ has 8 base cycles:} \\
(0,1,2,9,10,11)& (exceptional),\\
(0,1,3,4,7,9),& (0,1,5,3,8,10),& (0,1,6,5,2,12),& (0,1,7,3,9,13),\\
(0,2,7,13,3,8),& (0,3,7,17,4,11),& (0,3,12,8,15,6).\\ 
\end{tabular}
\medskip

\noindent\begin{tabular}{@{}lllll}
\multicolumn{4}{l}{The cyclic system $\CCC{6}{24}$ has 14 base cycles:} \\
(0,1,2,4,5,8),& (0,1,5,3,8,6),& (0,1,7,3,6,9),& (0,1,10,3,5,11),\\ 
(0,1,12,3,6,13),& (0,1,14,3,6,15),& (0,1,16,3,6,17),& (0,1,18,3,6,20),\\
(0,2,10,5,1,19),& (0,2,12,7,16,20),&(0,2,14,7,3,16),& (0,4,12,17,3,7),\\ 
(0,4,17,23,7,16),& (0,5,13,19,7,14). \\
\end{tabular}
\medskip

\noindent\begin{tabular}{@{}lllll}
\multicolumn{4}{l}{The cyclic system $\CCC{6}{27}$ has 18 base cycles:} \\
(0,1,3,5,24,26),& (0,3,4,8,23,24),& (0,4,6,1,21,23),& (0,5,6,12,21,22), \\
(0,6,9,1,18,21),& (0,7,3,8,24,20),& (0,8,1,10,26,19),& (0,10,2,11,25,17), \\ 
(0,11,1,8,26,16),& (0,12,2,16,25,15),& (0,13,1,4,26,14),& (0,1,9,4,13,16), \\
(0,2,13,5,18,22),& (0,2,16,23,5,18),& (0,3,16,6,21,17),& (0,4,12,19,7,21), \\
(0,6,17,23,12,19),& (0,6,18,4,23,7). \\
\end{tabular}
\medskip

\noindent\begin{tabular}{@{}lllll}
\multicolumn{4}{l}{The cyclic system $\CCC{6}{30}$ has 23 base cycles:} \\
(0,1,2,15,16,17)& (exceptional), \\
(0,2,3,15,27,28),& (0,3,4,15,26,27),& (0,4,6,15,24,26),& (0,5,1,15,29,25), \\
(0,6,7,15,23,24),& (0,7,2,15,28,23),& (0,1,6,3,10,15),& (0,1,8,10,2,20), \\
(0,2,10,5,14,17),& (0,2,13,5,19,14),& (0,3,9,16,1,10),& (0,4,10,29,13,23), \\
(0,4,12,25,7,21),& (0,10,21,6,24,13),& (0,5,12,9,14,15),& (0,18,10,12,19,20), \\
(0,3,12,7,15,17),& (0,25,9,1,12,14),& (0,9,24,1,7,10),& (0,10,24,13,19,23), \\
(0,14,26,9,17,21),& (0,19,7,22,3,13). \\
\end{tabular}
\medskip

\subsection{Base cycles of cyclic 9-cycle decompositions}
\label{ss:cikl9}

\noindent The cyclic system $\CCC{9}{9}$ has 1 base cycle: \\
(0,1,2,7,6,4,8,5,3). (Cf.\ Lemma 7.) \\

\noindent\begin{tabular}{@{}lll}
\multicolumn{3}{l}{The cyclic system $\CCC{9}{12}$ has 2 base cycles:} \\
(0,1,2,4,5,8,10,3,6),& (0,1,7,11,2,9,10,6,8). \\
\end{tabular}
\medskip

\noindent\begin{tabular}{@{}lll}
\multicolumn{3}{l}{The cyclic system $\CCC{9}{15}$ has 4 base cycles:} \\
(0,1,2,5,6,7,10,11,12)& (exceptional), \\
(0,1,3,4,8,2,5,7,9),& (0,1,6,3,10,2,12,4,7),& (0,1,10,14,6,12,2,5,11).\\
\end{tabular}
\medskip

\noindent\begin{tabular}{@{}lll}
\multicolumn{3}{l}{The cyclic system $\CCC{9}{18}$ has 5 base cycles:} \\
(0,1,2,4,5,8,3,6,7),& (0,1,5,3,9,4,2,10,8),& (0,1,9,4,8,10,3,6,13), \\
(0,1,10,3,7,14,17,2,11),& (0,3,13,17,5,10,14,4,9). \\
\end{tabular}
\medskip

\noindent\begin{tabular}{@{}lllll}
\multicolumn{3}{l}{The cyclic system $\CCC{9}{21}$ has 7 base cycles:} \\
(0,1,2,4,5,8,3,6,7),& (0,1,5,3,9,4,2,10,8),& (0,1,9,4,8,10,3,6,12), \\
(0,1,10,3,7,11,2,6,13),& (0,1,11,3,8,13,2,4,14),& (0,3,6,18,4,13,1,5,14), \\
(0,3,13,7,12,20,6,17,11).\\
\end{tabular}
\medskip

\noindent\begin{tabular}{@{}lllll}
\multicolumn{3}{l}{The cyclic system $\CCC{9}{24}$ has 10 base cycles:} \\
(0,1,2,8,9,10,16,17,18)& (exceptional), \\
(0,1,3,4,7,2,5,6,10),& (0,1,6,5,13,2,3,11,12),& (0,1,14,3,5,7,11,2,15), \\
(0,1,17,3,5,10,2,6,19),& (0,2,8,5,1,7,9,16,12),& (0,2,10,7,1,12,5,14,19), \\
(0,3,9,15,1,4,14,20,10),& (0,3,12,15,4,22,13,5,17),& (0,4,9,16,1,11,18,5,13). \\
\end{tabular}
\medskip

\noindent\begin{tabular}{@{}lllll}
\multicolumn{3}{l}{The cyclic system $\CCC{9}{27}$ has 12 base cycles:} \\
(0,1,3,6,10,17,21,24,26),& (0,2,6,1,7,20,26,21,25),& (0,3,4,10,1,26,17,23,24), \\
(0,4,9,1,8,19,26,18,23),& (0,5,7,13,2,25,14,20,22),& (0,6,10,18,1,26,9,17,21), \\
(0,7,9,19,5,22,8,18,20),& (0,8,9,18,4,23,9,18,19),& (0,10,3,15,1,26,12,24,17), \\
(0,11,1,22,8,19,5,26,16),& (0,12,9,1,16,11,26,18,15),& (0,13,4,19,20,7,8,23,14). \\
\end{tabular}
\medskip

\noindent\begin{tabular}{@{}lllll}
\multicolumn{3}{l}{The cyclic system $\CCC{9}{30}$ has 15 base cycles:} \\
(0,1,3,6,7,23,24,27,29),& (0,2,6,1,7,23,29,24,28),& (0,3,7,12,1,29,18,23,27), \\
(0,4,10,1,8,22,29,20,26),& (0,5,7,13,6,24,17,23,25),& (0,6,13,1,4,26,29,17,24), \\
(0,7,8,16,1,29,14,22,23),& (0,8,10,19,2,28,11,20,22),& (0,9,10,20,2,28,10,20,21), \\
(0,11,3,16,28,2,14,27,19),& (0,12,1,5,20,10,25,29,18),& (0,13,1,11,8,22,19,29,17), \\
(0,14,2,17,3,27,13,28,16),& (0,3,13,22,8,18,21,5,16),& (0,6,15,25,8,18,4,13,19). \\
\end{tabular}

\newpage

\paragraph{Acknowledgements.}
This research was supported by the National Research,
 Development, and Innovation Office -- NKFIH
 under the grant SNN 129364, and by the
  Artificial Intelligence National Laboratory (MILAB).


{\small

}

\newpage


\section*{Appendices --- Data of base cycles in cyclic systems}

Here we provide additional data for the cyclic systems
 described in Sections \ref{ss:cikl6} and \ref{ss:cikl9},
 arranged in four parts A, B, C, D.
The parts A and C deal with 6-cycle systems, while B and D
 deal with 9-cycle systems.
The tables in A and B are simpler, they just list what kind of
 triplets the corresponding base cycles cover.
Under the assumption that those data are correct, they are
 sufficient to check that all triplets of that $\kvi$ are covered.
Further details are given in C and D; they help to verify that
 the covered triplets are indeed those claimed in A and B.

Recall that the vertex set of a cyclic system of order $v$ is $\zzv$.
Let $(a,b,c)$ be any vertex triplet, where $0\le a<b<c\le v-1$.
Under the automorphism $i\mapsto i+1$ (mod $v$), the orbit of
 $(a,b,c)$ consists of $v$ triplets
 (except for triplets of the form $(a,a+v/3,a+2v/3)$, which are
 not members of $\kvi$ if just cyclic decompositions are wanted).
Three members of the orbit contain vertex 1.
Among the three we choose the one that is lexicographically
 smallest, and call it the \emph{type} of $(a,b,c)$.

A collection of base cycles generates a cyclic decomposition
 of $\kvi$ if and only if it covers all types.
Appendices A and B repeat the base cycles given in Sections
 \ref{ss:cikl6} and \ref{ss:cikl9}, and list the types
 covered by them.
Each base cycle contains vertex 0 as its 1st vertex, and its
 other (2nd, 3rd, etc.) vertices are listed in the order as
 they appear along the cycle in question.
In the same row of the table, the type of any three consecutive
 vertices is given.

The tables in Appendices C and D include additional columns
 marked with ``d''.
The values in those cells tell the ``smallest distance'',
 $b'-a'$, where $(a,b,c)$ is of type $(a',b',c')$
 (in particular, $a'=1$ and the inequalities $b'-a'\le c'-b'$
  and $b'-a'\le \min \{ c'-a', v-c'+a' \}$ are valid,
  by the definition of ``type'').
The triplet types covered by the base cycles are listed in
 subtables as ``first three'' and ``last three'' for
 6-cycles, and the same with ``next three'' included in the
 middle for 9-cycles.

\newpage
\section*{Appendix A: Cyclic systems --- \hfill\break
\hspace*{5.7em} 6-cycle decomposition}
\label{App:C6}

\begin{table}[htbp]
\centering
\caption{Base cycles of system $\CCC{6}{12}$ }

\label{t:C9:30}
\end{table}




\newpage
\section*{Appendix C: Detailed tables --- \hfill\break
\hspace*{5.7em} 6-cycle decompositions}

\begin{table}[htbp]
\caption{Base cycles and triplet type calculations of system $\CCC{6}{12}$ }

\begin{subtable}{1\textwidth}
\centering
\caption{The first three triplets of the base cycles, denoted by 1-2-3, 2-3-4 and 3-4-5.}

\label{9--30--c}
\end{subtable}
\end{table}


\end{document}